\newcommand{\NN}{\mathbbm{N}}
\newcommand{\RR}{\mathbbm{R}}
\newcommand{\LL}{\mathcal{L}}
\newcommand{\MM}{\mathcal{M}}
\newcommand{\piXI}{\pi_X^{-1}}
\newcommand{\skprod}{X\times Y}
\newcommand{\holder}{\mathrm{H\ddot{o}lder}}
\newcommand{\preSum}[1]{\sum_{(\Bar{x},\Bar{y})\in F^{- #1}(x,y)}}
\newtheorem{theorem}{Theorem}[section]
\newtheorem{lemma}[theorem]{Lemma}
\newtheorem{proposition}[theorem]{Proposition}
\newtheorem{corollary}[theorem]{Corollary}
\newtheorem{example}[theorem]{Example}
\theoremstyle{definition}
\theoremstyle{remark}
\newtheorem{remark}{Remark}
\DeclareMathOperator{\diam}{diam}
\title{Equilibrium States for Non-Uniformly Expanding Skew Products}
\author{Gregory Hemenway}
\address{Dept.\ of Mathematics, University of Houston, Houston, TX 77204}
\email{ghemenwa@cougarnet.uh.edu}
\begin{document}
\date{\today}
\thanks{The author was partially supported by NSF DMS-1554794 and DMS-2154378.}
\pagenumbering{arabic}
 
\begin{abstract}
We study equilibrium states for non-uniformly expanding skew products, and show how a family of fiberwise transfer operators can be
used to define the conditional measures along fibers of the product. We prove that the
pushforward of the equilibrium state onto the base of the product is itself an equilibrium
state for a H\"older potential defined via these fiberwise transfer operators.
\end{abstract}

\maketitle

\section{Introduction and Main Results}

Let $X$ and $Y$ be compact, connected Riemannian manifolds. Let $F$ be a skew product on $X\times Y$; i.e. there are continuous maps $f\colon X\to X$ and $\{g_x\colon Y\to Y|\ x\in X\}$ such that \[F(x,y)=(f(x),g_x(y)). \]

In their 1999 paper \cite{DG99}, Denker and Gordin showed that if a fibred system, a class of sytems including skew products, is uniformly expanding and topologically exact along fibers, then given a H\"older potential $\varphi\colon \skprod\to\RR$, there is a unique equilibrium state on $\skprod$ that has conditionals defined by a fiberwise Gibbs property and whose transverse measure on $X$ is a Gibbs measure for a certain H\"older potential on $X$. In this paper, we will extend this to systems that allow for non-uniform expansion along fibers (see Theorem \ref{thm:A} below). To state the main theorem, we recall some prior results and notations. Castro and Varandas \cite{CV13} proved that for a certain class of non-uniformly expanding systems and H\"older continuous potentials, eigendata for the Ruelle operator \[
    \LL_\varphi\psi(x,y)=\sum_{(\Bar{x},\Bar{y})\in F^{-1}(x,y)}e^{\varphi(\Bar{x},\Bar{y})}\psi(\Bar{x},\Bar{y}).
\] acting on the space of H\"older potentials can be used to construct a unique equilibrium state $\mu$ on $\skprod$. 

We will refer to $X$ as the base and $\{Y_x=\{x\}\times Y\}_{ x\in X}$ as the fibers of the product since \[\skprod=\bigcup_{x\in X}\{x\}\times Y.\] Note that each fiber $Y_x$ can be identified with $Y$. We will make the necessary distinctions as needed. In Section~\ref{sec:transop}, we shall describe fiberwise transfer operators $\LL_x\colon C(Y_x)\to C(Y_{fx})$ defined such that for any $\psi\in  C(Y_x)$, \[
    \LL_x\psi(fx,y)=\sum_{\Bar{y}\in g_x^{-1}y}e^{\varphi(x,\Bar{y})}\psi(x,\Bar{y}).
\]  We will show that the pushforward of $\mu$ onto $X$ is an equilibrium state for the following potential: \begin{equation}
    \Phi(x)=\lim_{n\to\infty} \log\dfrac{\langle\LL_{x}^{n+1}\mathbbm{1},\sigma\rangle}{\langle\LL_{fx}^{n}\mathbbm{1},\sigma\rangle}
    \label{eqn:Phi}
\end{equation} for any probability measure $\sigma$ on $Y$. Note that this is the same potential studied by Pollicott and Kempton \cite{PK11} and later by Piraino \cite{P19}.

Our main result is the following.

\renewcommand{\thetheorem}{\Alph{theorem}}
\setcounter{theorem}{0}
\begin{theorem}\label{thm:A}
Let $X$ and $Y$ be compact connected Riemannian manifolds and $(\skprod,F)$ be a Lipschitz skew product with a uniformly expanding base map and non-uniformly expanding fiber maps satisfying assumptions \ref{ass:A1} and \ref{ass:A2} in Section \ref{sec:NUE} below. Let $\varphi$ be a H\"older continuous potential on $\skprod$ satisfying condition \eqref{ass:pot} in Section \ref{sec:ESexun} and $\mu$ be its corresponding equilibrium state. Then the following are true. \begin{enumerate}
    \item The potential $\Phi$ in equation \eqref{eqn:Phi} exists independent of $\sigma$, is H\"older continuous, and satisfies $P(\varphi)=P(\Phi)$.
    \item $\hat{\mu}=\mu\circ\piXI$ is the unique equilibrium state for $\Phi$.
    \item  There is a unique family of measures $\{\nu_x\colon x\in X\}$ such that $\nu_x(Y_x)=1$ and $$\LL_x^*\nu_{fx}=e^{\Phi(x)}\nu_x.$$
    \item $x\mapsto\nu_x$ is weak$^*$-continuous.
    \item Let $\hat{h}$ and $\hat{\nu}$ be the eigendata of $\LL_\Phi$, i.e. $\LL_\Phi^*\hat{\nu}=e^{P(\Phi)}\hat{\nu}$, $\LL_\Phi\hat{h}=e^{P(\Phi)}\hat{h}$, and $\int\hat{h}d\hat{\nu}=1$. Then the measures $\mu_x=\frac{h(x,\cdot)}{\hat{h}(\cdot)}\nu_x$ are probability measures on $Y_x$ such that \[\mu=\int_X\mu_x\ d\hat{\mu}(x).\]
\end{enumerate} 
\label{sec:ES}
\end{theorem}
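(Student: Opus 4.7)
The overall strategy is to first construct the fiberwise eigenmeasures $\{\nu_x\}_{x\in X}$ and the potential $\Phi$ directly, then use them to intertwine the global Ruelle operator $\LL_\varphi$ on $\skprod$ with the base operator $\LL_\Phi$ on $X$, and deduce the remaining statements from this relation.

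For parts (1), (3), (4), I would fix any probability measure $\sigma$ on $Y$ and, for each $n\geq 0$ and $x\in X$, introduce an approximating probability measure $\nu_x^{(n)}$ on $Y_x$ by
\[
    \int \psi\, d\nu_x^{(n)} = \frac{\langle \LL_x^{n}\psi,\sigma\rangle}{\langle \LL_x^{n}\mathbbm{1},\sigma\rangle}, \qquad \psi\in C(Y_x).
\]
A short dual computation, using $\LL_{fx}^n\circ \LL_x = \LL_x^{n+1}$, gives $\LL_x^{*}\nu_{fx}^{(n)} = \lambda_x^{(n)}\,\nu_x^{(n+1)}$ with $\lambda_x^{(n)} = \langle \LL_x^{n+1}\mathbbm{1},\sigma\rangle/\langle \LL_{fx}^{n}\mathbbm{1},\sigma\rangle$, which is exactly the ratio in \eqref{eqn:Phi}. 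Thus, if the sequences $\nu_x^{(n)}$ converge weak$^*$ to some $\nu_x$ uniformly in $x$, and $\log \lambda_x^{(n)}$ converges uniformly to a H\"older function $\Phi(x)$, then passing to the limit gives the eigenequation $\LL_x^{*}\nu_{fx}=e^{\Phi(x)}\nu_x$ and immediately yields (3), (4) and the existence/regularity claims of (1); uniqueness among probability eigenfamilies is automatic since both $\nu_x(Y_x)=1$ and the eigenvalue $e^{\Phi(x)}$ are forced, and independence of $\Phi$ from $\sigma$ follows because any two choices produce the same limit.

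Establishing this fiberwise convergence is the main obstacle. Since the base is uniformly expanding but the fibers are only non-uniformly expanding, one cannot close a single-step Birkhoff cone contraction as in the Denker--Gordin setting \cite{DG99}. Instead, I would adapt the hyperbolic-times and Lasota--Yorke machinery of \cite{CV13} to the non-autonomous cocycle $\LL_x^n$, using assumptions \ref{ass:A1} and \ref{ass:A2} to obtain summable tail bounds and fiberwise bounded distortion for the densities $\LL_x^n\mathbbm{1}$. This should produce a projective contraction on a suitable fiberwise H\"older cone, giving uniform weak$^*$-convergence of $\nu_x^{(n)}$ together with a geometric Cauchy estimate for $\log\lambda_x^{(n)}$ whose rate is governed by the expansion of $f$ and the H\"older constant of $\varphi$; H\"older regularity of $\Phi$ then falls out.

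With $\{\nu_x\}$ in hand, I would deduce (1)--(2) and (5) as follows. Let $\hat\nu$ be a conformal measure for $\LL_\Phi$ with eigenvalue $e^{P(\Phi)}$, and define $\nu$ on $\skprod$ by $\int u\, d\nu = \int\!\!\int u(x,y)\, d\nu_x(y)\, d\hat\nu(x)$. A direct calculation using the fiberwise and base eigenequations yields $\LL_\varphi^{*}\nu = e^{P(\Phi)}\nu$; uniqueness of the Castro--Varandas conformal measure for $\varphi$ then forces $P(\varphi)=P(\Phi)$ and identifies $\nu$ (up to normalization) with their conformal measure, completing (1). For (2), the Abramov--Rokhlin formula gives $h_\mu(F) = h_{\hat\mu}(f) + h_\mu(F\mid \piXI \BX)$, and the fiberwise Gibbs property of $\{\nu_x\}$ lets me identify $h_\mu(F\mid \piXI \BX) + \int \varphi\, d\mu = \int \Phi\, d\hat\mu$; combined with the pressure equality this shows that $\hat\mu$ attains the variational principle for $\Phi$, with uniqueness inherited from that of $\mu$. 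Finally, for (5), Rokhlin disintegration yields $\mu = \int \mu_x\, d\hat\mu(x)$ abstractly; comparing with $\mu = h\,\nu = \int h(x,\cdot)\,\nu_x\, d\hat\nu(x)$ and using $d\hat\mu = \hat h\, d\hat\nu$ forces $\mu_x = \frac{h(x,\cdot)}{\hat h(x)}\nu_x$, the normalization $\mu_x(Y_x)=1$ being equivalent to $\LL_\Phi \hat h = e^{P(\Phi)}\hat h$.
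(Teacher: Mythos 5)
Your overall architecture matches the paper's: you build $\nu_{x}$ as weak$^*$ limits of normalized pullbacks $(\LL_x^n)^*\sigma$, read off $\Phi$ as the logarithm of the normalizing ratios, and then intertwine $\LL_\varphi$ with $\LL_\Phi$ via $\psi\mapsto\int\psi\,d\nu_x$ to get $P(\varphi)=P(\Phi)$, $\nu=\int\nu_x\,d\hat\nu$, and the disintegration in (5). Two remarks on where you differ. For (2) you invoke Abramov--Rokhlin plus a relative variational identity $h_\mu(F\mid\piXI\BX)+\int\varphi\,d\mu=\int\Phi\,d\hat\mu$; this is the Denker--Gordin route and would need the fiberwise Gibbs property and a Ledrappier--Walters type argument to be carried out here, neither of which is established in your sketch. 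The paper avoids this entirely: it identifies $\hat h=Ih$ and $\nu=I^*\hat\nu$ from the commutation relation and uniqueness of eigendata for the uniformly expanding base map, and then checks $\hat\mu=\hat h\hat\nu$ directly from the disintegration. Your route is viable but costs more than you have accounted for; the paper's is shorter given the operator identity you already wrote down.

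The genuine gap is the H\"older continuity of $\Phi$, which you dismiss with ``falls out.'' The cone contraction (which the paper does establish, as you anticipate, by showing $\LL_x(\Lambda_K^x)\subset\Lambda_{\zeta K}^{fx}$ with finite projective diameter) gives geometric convergence of $\Phi_n(x)\to\Phi(x)$ \emph{for fixed $x$}; it says nothing about $|\Phi_n(x)-\Phi_n(x')|$. That comparison requires pairing the $n$-step preimage branches over $x$ with those over $x'$, and because the fiber maps are only non-uniformly expanding, paired branches that spend too much time in the region $\mathcal{A}$ can \emph{separate} by a factor up to $L^n$, destroying any distortion estimate along them. The paper's resolution --- splitting words in $\{1,\dots,d\}^n$ into good and bad according to the frequency of visits to the $q$ contracting branches, showing via the Varandas--Viana counting lemma and condition \eqref{eqn:cond} that the bad branches contribute a fraction $O(\theta^m)$ of $\LL_x^n\mathbbm{1}$, and then choosing $m$ as a function of $d(f^nx,f^nx')$ to balance the two error terms and extract a H\"older exponent $\alpha\beta$ --- is the main new argument of the paper beyond \cite{CV13}, and nothing in your sketch supplies a substitute for it. Without it, both Lemma \ref{lem:nPhi} and hence Theorem \ref{thm:PhiHolder} (and with them the applicability of the uniformly expanding thermodynamic formalism to $\LL_\Phi$ on the base, on which (2) and (5) rest) are unsupported.
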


In Section \ref{sec:setting}, we recall some background on skew products, non-uniformly expanding maps, and the fiberwise transfer operators. Example \ref{ex:ManPom} shows that the doubling map in the base and Mannevile-Pomeau maps in the fibers whose parameters vary continuously in $x$ satisfies conditions \ref{ass:A1} and \ref{ass:A2}. Almost constant potentials satisfy \eqref{ass:pot} so Theorem A applies to an open set of potentials for a broad class of systems. In Section \ref{sec:trans}, we prove part (1) of Theorem \ref{thm:A} using the Hilbert metric and contractions on cones similar to Piraino \cite{P19}.  This will require new arguments since the fiberwise maps $\{g_x\}$ are only non-uniformly expanding. We also prove (3) (see Theorem \ref{thm:fibMeas}). In Section \ref{sec:fibers}, we complete the proof of Theorem \ref{thm:A} by establishing (2), (4) (see Lemma \ref{lem:measCont}), and (5).

\subsection*{Acknowledgements}
I would like to thank my advisor, Dr. Vaughn Climenhaga, for many insightful discussions during the writing of this paper.

\section{Non-uniformly Expanding Skew Products}
\label{sec:setting}

\renewcommand{\thetheorem}{\arabic{section}.\arabic{theorem}}
Let $X$ and $Y$ be compact, connected Riemannian manifolds and denote by $d$ the $L_1$ distance on $\skprod$. Denote by $\pi_X$ and $\pi_Y$ the natural projection maps from $\skprod$ onto $X$ and $Y$, respectively. 

\subsection{Dynamics of Skew Products}
To understand the dynamics of $F$ on $\skprod$, define for any $n\geq 0$ and $x\in X$, $$g_x^n:=\ g_{f^{n-1}x}\circ\dots\circ g_x\colon Y_x\to Y_{f^nx}.$$ Then for any $(x,y)\in \skprod$, the behavior of this system can be investigated through the sequence $$F^n(x,y)=(f^n(x),g_x^n(y)).$$ For each $n\geq 0$, define the $n^{th}$-Bowen metric as \[d_n((x,y),(x',y'))=\max_{0\leq i\leq n}\{d(F^i(x,y),F^i(x',y'))\}.\] Also, denote the $n^{th}$-Bowen ball centered at $(x,y)$ of radius $\delta>0$ by $$B_n((x,y),\delta)=\{(x',y')\colon d_n((x,y),(x',y'))<\delta\}.$$

\subsection{Uniform Expansion in the Base}

A map $f\colon X\to X$ is \emph{uniformly expanding} if there exists $C, \delta_f>0$ and $\gamma>1$ such that \[d(f^n(x,y),f^n(x',y'))\geq C\gamma^n d((x,y),(x',y'))\] whenever $d_n((x,y),(x',y'))\leq \delta_f$. One can assume without loss of generality that $C=1$ by passing to an adapted metric. This reduces locally expanding to \[d(f(x,y),f(x',y'))\geq\gamma d((x,y),(x',y'))\] whenever $d((x,y),(x',y'))\leq \delta_f$.

\subsection{Non-uniform Expansion Along Fibers}
\label{sec:NUE}

 We shall assume that $F$ is a local homeomorphism and that the map $f:X\to X$ is uniformly expanding.  The following paragraph describes our assumptions of non-uniform expansion along the fibers on $\skprod$. Condition \ref{ass:A1} says that $F$ is uniformly expanding outside of some region $\mathcal{A}$ and not too contracting in $\mathcal{A}$. Thus, if $\mathcal{A}$ is empty, then everything is reduced to the uniformly expanding case. Condition \ref{ass:A2} ensures that every point has at least one preimage in the expanding region.
 
 Assume there is a continuous function $(x,y)\mapsto L(x,y)$ such that for every $(x,y)\in \skprod$, there is a neighborhood $U_{x,y}$ of $(x,y)$ so that $F|_{U_{x,y}}$ is invertible and \[d(F^{-1}(u_x,u_y),F^{-1}(v_x,v_y))\leq L(x,y)d((u_x,u_y),(v_x,v_y))\] for all $(u_x,u_y),(v_x,v_y)\in F(U_{x,y})$. Since $F$ is a local homeomorphism from a compact connected manifold onto itself, $F$ is a covering map. Similarly for $f\colon X\to X$ so $\hat{d}:=|f^{-1}x|$ is constant in $x$. Thus, if $\overline{d}:=\deg(F)$, then $d:=|g^{-1}_x(y)|$ is constant for all $x\in X$ and $y\in Y_x$ and $\overline{d}=\hat{d}d$. Additionally, we shall assume that there exist constants $\gamma>1$ and $L\geq 1$, and an open region $\mathcal{A}\subset \skprod$ such that 
 \begin{enumerate}[label=\upshape{(A\arabic{*})}]
    \item \label{ass:A1}  $L(x,y)\leq L$ for every $x\in\mathcal{A}$ and $L(x,y)<\gamma^{-1}$ for all $x\not\in\mathcal{A}$, and $L$ is close enough to 1 so that equation (\ref{eqn:avg}) below is satisfied.
    \item \label{ass:A2} There exists a finite covering $\mathcal{U}$ of $\skprod$ by open sets for which $F$ is injective such that $\mathcal{A}$ can be covered by $q<d$ elements of $\mathcal{U}$. Moreover, we assume that the elements of $\mathcal{U}$ are small enough to separate curves on $\skprod$ in the sense that if $c$ is a distance-minimizing geodesic on $\skprod$, then each element of $\mathcal{U}$ can intersect at most one curve in $F^{-1}(c)$.
\end{enumerate} 
Note that \ref{ass:A2} is a strengthened version of H2 from Castro and Varandas \cite{CV13}.

\begin{example}\label{ex:ManPom}
The Manneville--Pomeau map $y\mapsto y+y^{p+1}\mod \mathbbm{Z}\ (p>0)$ on $\mathbbm{S}^1$ is a classic example of a system that displays non-uniform expansion. Define a map $F\colon X\times Y\to X\times Y$ by taking the base map $f$ to be the doubling map on $\mathbbm{S}^1$ and Manneville--Pomeau maps $g_x(y)=y+y^{p(x)+1}\mod \mathbbm{Z}$ in the fibers where $p(x)>0$ varies continuously in the base point. Each of these maps has two branches so $d=2$. Note that $g_x'(y)>g_x'(0)=1$ for all $y\not=0$. Let $\mathcal{A}$ be any small neighborhood around $\mathbbm{S}^1\times\{0\}\subset\mathbbm{T}^2$. Then on $\mathcal{A}^c$ the product map is uniformly expanding. So $q=1$. Then $F(x,y)=(f(x),g_x(y))$ satisfies conditions \ref{ass:A1} and \ref{ass:A2} and thus Theorem \ref{thm:A} holds for this example.
\end{example}

\begin{lemma}\label{lem:FibPreimage}
 If $F$ satisfies \ref{ass:A1} and \ref{ass:A2}, then for any $x,x'\in X$ and $y,y'\in Y$, we can pair off the preimages of $g_x^{-1}(y)=\{y_1,\ldots,y_{d}\}$ and $g_{x'}^{-1}(y')=\{y_1',\ldots,y_{d}'\}$ where for any $k=1,2,\ldots,q$, \[d((x,y_k),(x,y_k'))\leq L_xd((fx,y),(fx',y'))\] while for any $k=q+1,\ldots,d$, \[d((x,y_k),(x',y_k'))\leq \gamma_x^{-1}d((fx,y),(fx',y')).\]
 \end{lemma}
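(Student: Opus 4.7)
The strategy is to lift a distance-minimizing geodesic from $(fx,y)$ to $(fx',y')$ through each of the $d$ fiber preimages in $Y_x$ and use the curve-separation clause of \ref{ass:A2} to control how many lifts can cross the bad region $\mathcal A$.

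Let $c\colon[0,1]\to\skprod$ be a distance-minimizing geodesic from $(fx,y)$ to $(fx',y')$, so $\mathrm{length}(c)=d((fx,y),(fx',y'))$. Since $F$ is a local homeomorphism of compact connected manifolds it is a covering map of degree $\overline{d}$, so $c$ admits $\overline{d}$ disjoint continuous lifts. Exactly $d$ of these begin at the fiber preimages $(x,y_1),\dots,(x,y_d)$; call them $c_1,\dots,c_d$. By the second clause of \ref{ass:A2}, each element of $\mathcal U$ intersects at most one lift of $c$, so the $q$ elements of $\mathcal U$ that cover $\mathcal A$ together meet at most $q$ of the $c_k$. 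Relabel so that these at-most-$q$ lifts are $c_1,\dots,c_q$; then $c_{q+1},\dots,c_d$ lie entirely in $\mathcal A^c$.

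Next I would apply the inverse-branch Lipschitz estimate from \ref{ass:A1} along each lift, concatenated over the (finitely many) elements of $\mathcal U$ the lift traverses. For $k\le q$ the Lipschitz constant of the inverse branch is bounded by $L$ along $c_k$, while for $k>q$ it is strictly less than $\gamma^{-1}$. Comparing endpoints of $c$ and $c_k$ yields
\[
d(c_k(0),c_k(1))\le L\cdot d((fx,y),(fx',y')) \quad\text{for } k\le q,
\]
and the analogous inequality with $L$ replaced by $\gamma^{-1}$ for $k>q$. Finally, $c_k(1)\in F^{-1}(fx',y')$, and by uniqueness of path lifting for the base covering $f$ (in the regime where the $X$-projection of $c$ is short enough to lift from $x$ to $x'$, which is the only case where the stated inequality is non-trivial) one writes $c_k(1)=(x',y'_k)$ for a unique $y'_k\in g_{x'}^{-1}(y')$, which defines the required bijection.

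The main obstacle will be the last step: justifying that $c_k(1)$ really sits in the fiber $Y_{x'}$ rather than in $Y_{x''}$ for some other $x''\in f^{-1}(fx')$. This relies on the uniform expansion of $f$ together with compatible choices of adapted neighborhoods, and is the reason the pointwise constants $L_x$ and $\gamma_x^{-1}$ appear in the statement rather than the global constants $L$ and $\gamma$ from \ref{ass:A1}. A secondary technical point is that a single lift may cross several sets of $\mathcal U$, so the Lipschitz bound must be obtained by concatenating the local estimates of \ref{ass:A1} along the lift rather than applying a single inverse branch.
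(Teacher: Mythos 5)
Your proposal is correct and follows essentially the same route as the paper: lift the distance-minimizing geodesic from $(fx,y)$ to $(fx',y')$ through the covering map $F$, use the curve-separation clause of \ref{ass:A2} to conclude that at most $q$ of the lifts meet $\mathcal{A}$, and apply the local inverse-Lipschitz bounds of \ref{ass:A1} along each lift. The endpoint-identification issue you flag at the end (that $c_k(1)$ lies in $Y_{x'}$ rather than in some other fiber over $fx'$) is passed over silently in the paper's own proof as well, so your version is, if anything, slightly more careful.
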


\begin{proof}
Let $(x,y),(x',y')\in\skprod$ and $c$ be a distance-minimizing geodesic between these points. Let $g_x^{-1}(y)=\{y_1,\ldots,y_d\}$. Since $F$ is a covering map, we can uniquely lift $c$ to curves $c_1,\dots,c_d$ such that each $c_k$ starts at $y_i$ and $F(c_k)=c$ for all $k$. Then letting $y_k'$ be the other endpoint of $c_k$, we get a collection of preimages $g_{x'}^{-1}(y')=\{y'_1,\ldots,y'_d\}$. Cover each $c_k$ by domains of injectivity as in \ref{ass:A2}. Then at most $q$ of these balls can intersect $\mathcal{A}$ and each one intersects at most one of the curves $c_k$. Thus there are at most $q$ curves $c_k$ that intersect $\mathcal{A}$. Without loss of generality, we can assume that these are the first $q$ preimages. Applying \ref{ass:A1} gives the desired result.
\end{proof}

\subsection{Existence and Uniqueness of Equilibrium States}
\label{sec:ESexun}

We say $\varphi\colon X\times Y\to\RR$ is \emph{$\alpha$-H\"older continuous} for $\alpha>0$ if \[|\varphi|_\alpha:=\sup_{(x,y)\not=(x',y')}\frac{|\varphi(x,y)-\varphi(x',y')|}{d((x,y),(x',y'))^\alpha}<\infty.\] We denote by $ C^\alpha= C^\alpha(\skprod)$ the Banach space of $\alpha$-$\holder$ continuous functions on $\skprod$. The $n^{th}$ Birkhoff sum is defined as $S_n\varphi(x,y)=\sum_{k=0}^n\varphi\circ F^k(x,y)$. 

We denote by $\MM(\skprod)$ the space of Borel probability measures on $\skprod$ and $\MM(\skprod,F)$ those that are $F$-invariant. Given a continuous map $F\colon \skprod\to\skprod$ and a potential $\varphi\colon\skprod\to\RR$, the variational principle asserts that \begin{equation}
\label{eqn:pressure}
    P(\varphi)=\sup\Big\{h_\nu(F)+\int\varphi\ d\nu \colon \nu\in\mathcal{M}(\skprod,F) \Big\}
\end{equation} 
where $P(\varphi)$ denotes the topological pressure of $F$ with respect to $\varphi$ and $h_\mu(F)$ denotes the metric entropy of $F$. An \emph{equilibrium state} for $F$ with respect to $\varphi$ is an invariant measure that achieves the supremum in the right-hand side of equation \eqref{eqn:pressure}. For uniformly expanding maps, every equilibrium state $\mu$ satisfies the Gibbs property: for any $\varepsilon>0$, there exists a $C>0$ such that \[C^{-1}\leq\dfrac{\mu\big(B_n((x,y),\varepsilon)\big)}{e^{-nP(\varphi)+S_n\varphi(x,y)}}\leq C\] for any $(x,y)\in\skprod$ and $n\in \NN$.

For our purposes in this paper, we fix a $\holder$ potential $\varphi\in  C^\alpha$ satisfying \begin{equation}\tag{P}\label{ass:pot}
    \sup\varphi-\inf\varphi<\varepsilon_\varphi\ \text{and\ } |e^\varphi|_\alpha<\varepsilon_\varphi e^{\inf\varphi}
\end{equation} for some $\varepsilon_\varphi>0$ satisfying the equations \eqref{eqn:cond} and \eqref{eqn:bound} below (see Section \ref{sec:}). Potentials that are almost constant satisfy condition \eqref{ass:pot}. Thus, Theorem \ref{thm:A} holds for measures of maximal entropy. We assume that $L$ is close enough to $1$ and $0<\varepsilon_\varphi<\log d -\log q$ so that \begin{equation}\label{eqn:cond}
    e^{\varepsilon_\varphi}\cdot\Bigg(\dfrac{(d-q)\gamma^{-\alpha}+qL^\alpha}{d}\Bigg)<1.
\end{equation} Choose $\varepsilon>0$ such that $\dfrac{de^\varepsilon e^{\varepsilon_\varphi}}{q}<1$. Let $\iota=\iota(\varepsilon,d,q)\in(0,1)$ be given by Lemma \ref{lem:bwords} below. Assume that $L$ is close enough to $1$ so that there is a $c>0$ satisfying \begin{equation}\label{eqn:avg}
    0<\gamma^{-(1-\iota)}L^\iota<e^{-2c}<1.
\end{equation} Under these assumptions, it is known that there is a unique equilibrium state $\mu$ for $\varphi$ on $X\times Y$.

\begin{lemma}
 If $F$ is topologically exact and satisfies \ref{ass:A1}, \ref{ass:A2}, and $\varphi$ satisfies $\sup\varphi-\inf\varphi<\log\overline{d}-\log q$, then there exists an expanding conformal measure such that $\LL_\varphi^*\nu=\lambda\nu$ and $supp(\nu)=\overline{X\times Y}$, where the spectral radius of $\LL_\varphi$, $\lambda:=r(\LL_\varphi)\geq \overline{d} e^{\inf\varphi}$. Moreover, $\nu$ is a non-lacunary Gibbs measure and has a Jacobian with respect to $F$ given by $J_\nu F=\lambda e^{-\varphi}$.

\end{lemma}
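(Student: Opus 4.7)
The proof will follow the standard blueprint from thermodynamic formalism for non-uniformly expanding maps (cf.\ Castro--Varandas~\cite{CV13}), adapted to the skew product setting. I outline the main steps.

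\textbf{Existence of the eigenmeasure.} The plan is to apply the Schauder--Tychonoff fixed point theorem to the continuous map $T\colon\MM(\skprod)\to\MM(\skprod)$ given by $T(\eta)=\LL_\varphi^*\eta/(\LL_\varphi^*\eta)(\mathbbm{1})$. The space $\MM(\skprod)$ is convex and weak$^*$-compact, and $T$ is weak$^*$-continuous because $\LL_\varphi\colon C(\skprod)\to C(\skprod)$ is a bounded positive operator. A fixed point $\nu$ yields $\LL_\varphi^*\nu=\lambda\nu$ with $\lambda:=(\LL_\varphi^*\nu)(\mathbbm{1})>0$.

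\textbf{Spectral radius and lower bound.} To identify $\lambda$ with $r(\LL_\varphi)$, first integrate the eigenvalue equation against $\mathbbm{1}$ to get
\[
\lambda = \int \LL_\varphi\mathbbm{1}\,d\nu = \int\sum_{(\bar x,\bar y)\in F^{-1}(x,y)}e^{\varphi(\bar x,\bar y)}\,d\nu(x,y)\geq \overline{d}\,e^{\inf\varphi},
\]
using $|F^{-1}(x,y)|=\overline{d}$. Iterating gives $\lambda^n=\int\LL_\varphi^n\mathbbm{1}\,d\nu\leq \|\LL_\varphi^n\|$, hence $\lambda\leq r(\LL_\varphi)$. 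The reverse inequality follows from standard Perron--Frobenius-type arguments for positive operators with a strictly positive dual eigenvector: $\|\LL_\varphi^n\|_\infty\leq C\lambda^n$ because $\LL_\varphi^n\mathbbm{1}$ can be bounded in sup norm using the integral against $\nu$ together with distortion control (see next step).

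\textbf{Jacobian formula and full support.} Partition $\skprod$ into Borel sets $\{U_i\}$ on which $F$ is injective (guaranteed by~\ref{ass:A2}). For $g\in C(\skprod)$ supported in some $F(U_i)$, the identity $\int \LL_\varphi g\,d\nu=\lambda\int g\,d\nu$ becomes $\int_{U_i} e^\varphi (g\circ F)\,d\nu=\lambda\int g\,d\nu$; this is the defining property of the Jacobian $J_\nu F=\lambda e^{-\varphi}$. For full support, suppose $\nu(U)=0$ for some nonempty open $U$. Topological exactness of $F$ gives $N$ with $F^N(U)=\skprod$, so for any $(x,y)$ at least one preimage in $F^{-N}(x,y)$ lies in $U$ and $\LL_\varphi^N\mathbbm{1}_U(x,y)\geq e^{N\inf\varphi}>0$, contradicting $\int\LL_\varphi^N\mathbbm{1}_U\,d\nu=\lambda^N\nu(U)=0$ once one passes from $\mathbbm{1}_U$ to a continuous positive approximation supported in $U$.

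\textbf{Non-lacunary Gibbs property.} This will be the main technical step. The plan is to adapt the hyperbolic-times argument of Castro--Varandas to the skew product. Using Lemma~\ref{lem:FibPreimage} to pair preimages, together with the condition $\sup\varphi-\inf\varphi<\log\overline{d}-\log q$, one shows that orbits visit the expanding region $\mathcal{A}^c$ with positive frequency; at hyperbolic times, $F^n$ is uniformly expanding on a ball around $(x,y)$ with bounded distortion of the Birkhoff sum $S_n\varphi$. The upper Gibbs bound $\nu(B_n((x,y),\varepsilon))\leq Ce^{-nP(\varphi)+S_n\varphi(x,y)}$ follows from the Jacobian formula and iteration; the matching lower bound holds at the subsequence of hyperbolic times, which has positive density, yielding the non-lacunary Gibbs property. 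The main obstacle here is ensuring that hyperbolic-times density, combined with the bounded-distortion inequality controlled by~\eqref{eqn:cond} and~\eqref{eqn:avg}, delivers a uniform (rather than merely $(x,y)$-dependent) Gibbs constant; this is where the quantitative assumptions on $\varepsilon_\varphi$ and $L$ become essential.
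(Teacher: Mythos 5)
The paper does not actually prove this lemma: its ``proof'' is a one-line citation to Theorem~4.1 of Varandas and Viana \cite{VV08}, and your outline is essentially a reconstruction of the strategy of that cited result (Schauder--Tychonoff for the conformal measure, the eigenvalue identity for the Jacobian, topological exactness for full support, and hyperbolic times with the counting condition $\sup\varphi-\inf\varphi<\log\overline{d}-\log q$ for the non-lacunary Gibbs property). So the approach is the right one and matches the source the paper relies on; just be aware that the last step, which you present only as a plan, is where all the real work in \cite{VV08} lives (positive density of hyperbolic times for $\nu$-typical points and uniform distortion constants at those times), and that the identification $\lambda=r(\mathcal{L}_\varphi)$ also depends on that distortion control rather than following from the fixed-point construction alone.
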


\begin{proof}
    See Theorem 4.1 in Varandas and Viana \cite{VV08}.
\end{proof}
We will not use the non-lacunary property or $J_\nu F$. For more details, see \cite{VV08}. 

\begin{theorem}
\label{CV13}
Let $F\colon \skprod\to \skprod$ be a local homeomorphism with Lipschitz continuous inverse and $\varphi\colon \skprod\to \RR$ be a H\"older continuous potential satisfying \ref{ass:A1}, \ref{ass:A2}, and \eqref{ass:pot}. Then the Ruelle-Perron-Frobenius operator has a spectral gap property in the space of H\"older continuous observables, there exists a unique equilbrium state $\mu$ for $F$ with respect to $\varphi$ and the density $d\mu/d\nu$ is H\"older continuous.
\end{theorem}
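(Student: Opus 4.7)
The plan is to follow the cone-contraction strategy of Castro and Varandas \cite{CV13}. Let $\nu$ be the conformal eigenmeasure from the preceding lemma, with eigenvalue $\lambda$, and consider the normalized operator $\tilde{\LL} = \lambda^{-1}\LL_\varphi$. I would introduce a cone of positive log-$\alpha$-H\"older observables of the form
\[
\Lambda_\kappa = \{\psi \in C^\alpha(\skprod) : \psi > 0,\ \log\psi(u) - \log\psi(v) \leq \kappa\, d(u,v)^\alpha\ \text{for all}\ u,v\}
\]
for a suitably chosen $\kappa > 0$, and show that $\tilde{\LL}$ preserves $\Lambda_\kappa$ and, after iteration, sends it into a strictly smaller cone.

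The heart of the argument is cone invariance. Given nearby points $u, v \in \skprod$, Lemma \ref{lem:FibPreimage} pairs the $d$ fiber preimages of $u$ and $v$ so that at most $q$ are merely Lipschitz-close with constant $L$ (those landing in $\mathcal{A}$) while the remaining $d - q$ contract by $\gamma^{-1}$. Combined with the H\"older control of $e^\varphi$ and the pointwise oscillation bound from \eqref{ass:pot}, the ratio $\tilde{\LL}\psi(u)/\tilde{\LL}\psi(v)$ is governed by a weighted average of preimage contributions proportional to $e^{\varepsilon_\varphi}\big((d-q)\gamma^{-\alpha} + qL^\alpha\big)/d$, which is strictly less than $1$ by \eqref{eqn:cond}. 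Iterating and tracking the asymptotic proportion $\iota$ of excursions into $\mathcal{A}$ along an orbit — this is where \eqref{eqn:avg} enters — yields a strict sub-cone inclusion $\tilde{\LL}^N(\Lambda_\kappa) \subset \Lambda_{\kappa'}$ with $\kappa' < \kappa$ for some iterate $N$.

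Birkhoff's theorem on projective contractions then implies that $\tilde{\LL}^N$ is a uniform contraction in the Hilbert metric on $\Lambda_\kappa$, producing a unique positive fixed point $h \in \Lambda_\kappa$ (automatically H\"older continuous), exponential convergence $\tilde{\LL}^n\psi \to h\int\psi\, d\nu$ for every H\"older $\psi$, and hence the spectral gap. The measure $\mu = h\nu$ is $F$-invariant, the density $d\mu/d\nu = h$ is H\"older by construction, and standard thermodynamic formalism arguments (variational principle together with the Gibbs property of $\nu$) identify $\mu$ as \emph{the} equilibrium state for $\varphi$. Uniqueness then follows directly from the spectral gap.

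The principal obstacle is establishing cone invariance in the regime $L \geq 1$: preimages in $\mathcal{A}$ expand, rather than contract, log-H\"older constants, so one must demonstrate that the loss from the $q$ bad preimages is more than compensated by the gain from the $d - q$ good ones and by the fact that orbits visit $\mathcal{A}$ with asymptotic frequency at most $\iota$. Calibrating $\kappa$ against $\varepsilon_\varphi, L, \gamma, d, q, c, \iota$ so that both \eqref{eqn:cond} and \eqref{eqn:avg} deliver contraction is the combinatorial core of the argument, and it is precisely this bookkeeping that is executed in detail in \cite{CV13}.
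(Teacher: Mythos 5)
The paper's own proof of this theorem is a one-line citation to Theorem A of Castro--Varandas \cite{CV13}, and your proposal is an accurate outline of precisely that cone-contraction argument, ultimately deferring to the same reference for the detailed bookkeeping. The only small inaccuracy is that under \eqref{eqn:cond} the single-step weighted average over the $d$ preimages already yields strict cone contraction (compare Lemma \ref{lem:ConeContraction}), so the frequency-$\iota$ accounting tied to \eqref{eqn:avg} is not needed for cone invariance --- in this paper that machinery is reserved for the H\"older continuity of $\Phi$ in the base.
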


\begin{proof}
    See Theorem A in Castro and Varandas \cite{CV13}.
\end{proof}

Denote by $\hat{\mu}=\mu\circ\piXI$ the pushforward of the equilibrium state $\mu$ onto the base $X$. Throughout this paper, we shall refer to this measure as the transverse measure for our skew product.

\subsection{Fiberwise Transfer Operators for Skew Products}
\label{sec:transop}

As common in the literature, we will utilize Ruelle operators to study the equilibrium state on $(\skprod,F)$. Define the transfer operator $\LL_\varphi$ acting on $ C(\skprod)$ by sending $\psi\in  C(\skprod)$ to \[
    \LL_\varphi\psi(x,y)=\preSum{1}e^{\varphi(\overline{x},\overline{y})}\psi(\overline{x},\overline{y}).
\] Note that under the skew product representation of $F$, we may write \[\sum_{(\overline{x},\overline{y})\in F^{-1}(x,y)}e^{\varphi(\overline{x},\overline{y})}\psi(\overline{x},\overline{y})=\sum_{\overline{x}\in f^{-1}x}\sum_{\overline{y}\in g_{\overline{x}}^{-1}y}e^{\varphi(\overline{x},\overline{y})}\psi(\overline{x},\overline{y}).\] This gives rise to a fiberwise transfer operator on the fibers of $\skprod$. We disintegrate $\varphi$ and get the family of fiberwise potentials $\{\varphi_x(\cdot)=\varphi(x,\cdot)\}_{x\in X}$. For every $x\in X$, let $\LL_x\colon C(Y_x)\to C(Y_{fx})$ be defined by \[
    \LL_x\psi_x(y)=\sum_{\overline{y}\in g_x^{-1}y}e^{\varphi_x(\overline{y})}\psi_x(\overline{y})
\]for any $\psi\in  C(\skprod)$. We shall iterate the transfer operator by letting \[\LL_x^n=\LL_{f^{n-1}x}\circ\cdots\circ \LL_x\colon C(Y_x)\to C(Y_{f^nx}).\] Along with each of these fiberwise operators, we define its dual $\LL_x^*$ by sending a probability measure $\eta\in \mathcal{M}(Y_{fx})$ to the measure $\LL_x^*\eta\in \mathcal{M}(Y_x)$ such that for any $\psi\in  C(\skprod)$, \[\int \psi\,d(\LL_x^*\eta)=\int \LL_x\psi\,d\eta.\]

\section{A Potential for the Transverse Measure}

\label{sec:trans}

\label{sec:}

Piraino \cite{P19} shows that for subshifts of finite type, $\hat{\mu}=\mu\circ\pi_X^{-1}$ is an equilibrium state for the following potential \[\Phi(x)=\lim_{n\to\infty}\log \dfrac{\langle\LL_x^{n+1}\mathbbm{1},\sigma\rangle}{\langle\LL_{fx}^{n}\mathbbm{1},\sigma\rangle}\] where $\sigma$ is any probability measure supported on $Y$. We will show in Theorem \ref{thm:potential} that this potential exists in our setting. Furthermore, in Theorem \ref{thm:PhiHolder} we show that $\Phi$ is $\holder$ continuous.

\subsection{Birkhoff Contraction Theorem}
It is not hard to check that the Ruelle operator preserves the Banach space of H\"older continuous potentials $C^\alpha= C^\alpha(\skprod),\ 0<\alpha<1$. A subset $\Lambda\subset  C^\alpha$ is called a \emph{cone} if $a\Lambda=\Lambda$ for all $a>0$. A cone $\Lambda$ is \emph{convex} if $\psi+\zeta\in\Lambda$ for all $\psi,\ \zeta\in\Lambda$. We say that $\Lambda$ is a closed cone if $\Lambda\cup\{0\}$ is \emph{closed} with respect to the H\"older norm. We assume our cones are closed, convex, and $\Lambda\cap(-\Lambda)=\emptyset$. For any probability measure $\eta$ and H\"older potential $\psi$, let $\langle\psi,\eta\rangle=\int\psi d\eta$. Given a closed cone $\Lambda\subset C^\alpha$, we can define the dual cone $\Lambda^*=\{\eta\in (C^\alpha)^*\colon \langle\psi,\eta\rangle\geq 0\ \text{for\ all\ } \psi\in\Lambda\}$. For more on cones, see Section 4 from \cite{N04} or the appendices of \cite{P19}.

Define a partial ordering $\preceq$ on $ C^\alpha$ by saying $\phi\preceq \psi$ if and only if $\psi-\phi\in \Lambda\cup\{0\}$ for any $\phi,\psi\in  C^\alpha$. 
Let \[A=A(\phi,\psi)=\sup\{t>0\colon t\phi\preceq\psi\}\ \text{and\ } B=B(\phi,\psi)=\inf\{t>0\colon \psi\preceq t\phi\}.\] The \emph{Hilbert projective metric} with respect to a closed cone $\Lambda$ is defined as \[\Theta(\phi,\psi)=\log\frac{B}{A}.\]

The following lemma is useful when calculating distances in the Hilbert metric. For a proof, see Section 4 in \cite{N04}.

\begin{lemma}
Let $\Lambda$ be a closed cone and $\Lambda^*$ its dual. For any $\phi,\psi\in \Lambda$, \[\Theta(\phi,\psi)=\log\Bigg(\sup\Bigg\{\dfrac{\langle\phi,\sigma\rangle\langle\psi,\eta\rangle}{\langle\psi,\sigma\rangle\langle\phi,\eta\rangle}\colon \sigma,\eta\in \Lambda^*\ \text{and\ } \langle\psi,\sigma\rangle\langle\phi,\eta\rangle\neq 0\Bigg\}\Bigg).\]

\label{sec:hilMet}
\end{lemma}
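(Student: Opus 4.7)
The plan is to compute $A(\phi,\psi)$ and $B(\phi,\psi)$ directly via duality and then take the ratio. The key fact I would use is the standard bipolar-type characterization: for a closed convex cone $\Lambda$, an element $\chi \in C^\alpha$ belongs to $\Lambda \cup \{0\}$ if and only if $\langle \chi, \eta\rangle \geq 0$ for every $\eta \in \Lambda^*$. The nontrivial direction is a Hahn--Banach separation argument: if $\chi \notin \Lambda$, the closedness and convexity of $\Lambda$ allow one to separate $\chi$ from $\Lambda$ by a continuous linear functional, and homogeneity of $\Lambda$ forces this functional (after rescaling) to lie in $\Lambda^*$ while evaluating negatively at $\chi$.

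Once this is in hand, I would unpack the definition $t\phi \preceq \psi \iff \psi - t\phi \in \Lambda \cup \{0\}$ and translate it into the dual condition
\[
\langle \psi, \eta\rangle - t\langle \phi, \eta\rangle \geq 0 \quad \text{for every } \eta \in \Lambda^*.
\]
Since $\phi \in \Lambda$ implies $\langle \phi, \eta\rangle \geq 0$ on $\Lambda^*$, this is equivalent to $t \leq \langle\psi,\eta\rangle/\langle\phi,\eta\rangle$ for every $\eta \in \Lambda^*$ with $\langle\phi,\eta\rangle > 0$. Taking the supremum over admissible $t$ yields
\[
A(\phi,\psi) = \inf\Bigl\{\tfrac{\langle\psi,\eta\rangle}{\langle\phi,\eta\rangle} : \eta \in \Lambda^*,\ \langle\phi,\eta\rangle > 0\Bigr\},
\]
and an entirely symmetric argument applied to $\psi \preceq t\phi$ gives
\[
B(\phi,\psi) = \sup\Bigl\{\tfrac{\langle\psi,\sigma\rangle}{\langle\phi,\sigma\rangle} : \sigma \in \Lambda^*,\ \langle\phi,\sigma\rangle > 0\Bigr\}.
\]

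Forming the ratio $B/A$ factors into a supremum over independent $\sigma$ and $\eta$, giving
\[
\tfrac{B}{A} = \sup_{\sigma,\eta \in \Lambda^*} \tfrac{\langle\psi,\sigma\rangle\langle\phi,\eta\rangle}{\langle\phi,\sigma\rangle\langle\psi,\eta\rangle},
\]
which after relabeling $\sigma \leftrightarrow \eta$ matches the expression in the lemma. Taking logarithms completes the proof, with the convention that the forbidden denominators $\langle\psi,\sigma\rangle\langle\phi,\eta\rangle = 0$ are excluded exactly as stated.

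The main obstacle is justifying the bipolar characterization cleanly in the H\"older setting: one needs $\Lambda$ closed in a topology compatible with the dual pairing (the H\"older norm, as stipulated), enough continuous functionals in $\Lambda^*$ for separation to be meaningful, and a careful handling of the degenerate cases $A = 0$ or $B = \infty$ (which correspond to $\phi$ and $\psi$ not being comparable and force $\Theta(\phi,\psi) = \infty$, consistently with the supremum on the right-hand side being $+\infty$). Everything else is a bookkeeping calculation once the duality is established.
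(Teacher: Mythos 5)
Your duality argument is correct and is essentially the standard proof of this identity: the paper does not prove the lemma itself but defers to Section 4 of \cite{N04}, where the same Hahn--Banach/bipolar characterization of $\Lambda\cup\{0\}$ is used to compute $A$ and $B$ as an infimum and supremum of the ratios $\langle\psi,\eta\rangle/\langle\phi,\eta\rangle$ over the dual cone, after which the formula follows by factoring $B/A$ exactly as you do. Your closing caveats about closedness of $\Lambda\cup\{0\}$ in the H\"older norm and the degenerate cases $A=0$, $B=\infty$ are the right points to watch, and they are handled the same way in the reference.
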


The main idea of the proof of Theorem~\ref{sec:ES} is to find a cone on which the fiberwise transfer operator is a contraction. To accomplish this, we will need the Birkhoff Contraction theorem:

\begin{theorem}[Birkhoff \cite{B57}]
Let $\Lambda_1, \Lambda_2$ be closed cones and $\LL\colon \Lambda_1\to \Lambda_2$ a linear map such that $\LL \Lambda_1\subset \Lambda_2$. Then for all $\phi,\psi\in \Lambda_1$ \[\Theta_{\Lambda_2}(\LL\phi,\LL\psi)\leq \tanh\Big(\frac{\diam_{\Lambda_2}(\LL \Lambda_1)}{4}\Big)\Theta_{\Lambda_1}(\phi,\psi)\] where $\diam_{\Lambda_2}(\LL \Lambda_1)=\sup\{\Theta_{\Lambda_2}(\LL\phi,\LL\psi)\colon \phi,\psi\in \Lambda_1\}$ and $\tanh\infty=1$.
\label{thm: Birkhoff}
\end{theorem}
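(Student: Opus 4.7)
The plan is the classical bracketing argument for the Birkhoff--Hopf theorem. First I would dispose of the trivial case $d := \diam_{\Lambda_2}(\LL\Lambda_1) = \infty$, where $\tanh(d/4) = 1$ and the inequality reduces to $\Theta_{\Lambda_2}(\LL\phi,\LL\psi) \leq \Theta_{\Lambda_1}(\phi,\psi)$. This is immediate from linearity: any bracket $A\phi \preceq \psi \preceq B\phi$ in $\Lambda_1$ pushes forward to $A\LL\phi \preceq \LL\psi \preceq B\LL\phi$ in $\Lambda_2$ since $\LL$ maps $\Lambda_1$ into $\Lambda_2$.

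Now assume $d < \infty$. Write $A = A(\phi,\psi)$, $B = B(\phi,\psi)$, so $\Theta := \Theta_{\Lambda_1}(\phi,\psi) = \log(B/A)$, and set $u := \psi - A\phi$ and $v := B\phi - \psi$. Both lie in $\Lambda_1 \cup \{0\}$ (replacing $A, B$ by $A - \epsilon, B + \epsilon$ and taking $\epsilon \to 0$ at the end if needed); I may assume both are nonzero, else $\phi, \psi$ are proportional and there is nothing to prove. Linearity of $\LL$ preserves the algebraic identities
\[
\phi = \frac{u + v}{B - A}, \qquad \psi = \frac{Bu + Av}{B - A}.
\]
Using the diameter hypothesis, choose $A', B' > 0$ with $A' \LL u \preceq \LL v \preceq B' \LL u$ in $\Lambda_2$ and $\log(B'/A') \leq d$. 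Substituting into the identities and solving the two resulting cone inequalities for $\LL\psi$ against $\LL\phi$ yields
\[
\frac{B + AB'}{1 + B'}\,\LL\phi \preceq \LL\psi \preceq \frac{B + AA'}{1 + A'}\,\LL\phi,
\]
so $\Theta_{\Lambda_2}(\LL\phi,\LL\psi) \leq \log\frac{(B+AA')(1+B')}{(B+AB')(1+A')}$.

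A one-variable optimization over $A' > 0$ with the ratio $S := B'/A'$ held fixed (setting $T := B/A$) shows the right side is maximized at $A' = \sqrt{T/S}$, where it collapses to $\log\bigl((1+\sqrt{TS})/(\sqrt{T}+\sqrt{S})\bigr)^2$. Substituting $T = e^\Theta$, $S = e^D$ with $D \leq d$ and applying $1 + e^x = 2e^{x/2}\cosh(x/2)$ rewrites this bound as
\[
2\log\frac{\cosh((\Theta+D)/4)}{\cosh((\Theta-D)/4)} = 2\int_{(\Theta-D)/4}^{(\Theta+D)/4}\tanh(t)\,dt.
\]
Call this $g(\Theta)$. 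Differentiating and using $\cosh(a)\cosh(b) = \tfrac12[\cosh(a+b)+\cosh(a-b)]$ collapses $g'(\Theta)$ to $\sinh(D/2)/(\cosh(\Theta/2) + \cosh(D/2))$, which is maximized at $\Theta = 0$ with value $\sinh(D/2)/(1 + \cosh(D/2)) = \tanh(D/4) \leq \tanh(d/4)$ by the half-angle identity. Since $g(0) = 0$, integrating gives $g(\Theta) \leq \tanh(d/4)\,\Theta$, which is the desired inequality.

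The main obstacle is the bookkeeping in the optimization and the final calculus identity: every individual manipulation is routine, but the argument only succeeds because the extremizer $A' = \sqrt{T/S}$ places the interval $[(\Theta-D)/4,(\Theta+D)/4]$ symmetrically about $\Theta/4$, after which the half-angle identity $\sinh(x)/(1+\cosh(x)) = \tanh(x/2)$ delivers the contraction factor $\tanh(d/4)$ in precisely the form stated.
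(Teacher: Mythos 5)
The paper does not prove this statement at all: it is quoted as a classical result with a citation to Birkhoff, so there is no in-paper argument to compare against. Your proposal is the standard bracketing proof of the Birkhoff contraction theorem and it is correct: the decomposition $\phi=(u+v)/(B-A)$, $\psi=(Bu+Av)/(B-A)$ with $u=\psi-A\phi$, $v=B\phi-\psi$, the resulting two-sided bound $\frac{B+AB'}{1+B'}\LL\phi\preceq\LL\psi\preceq\frac{B+AA'}{1+A'}\LL\phi$, the optimization at $A'=\sqrt{T/S}$, and the hyperbolic identities all check out. The only (harmless) omissions are the vacuous case $\Theta_{\Lambda_1}(\phi,\psi)=\infty$ and the observation that closedness of the cones makes $u,v\in\Lambda_1\cup\{0\}$ without the $\epsilon$-perturbation, but you already flag the latter.
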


\subsection{Existence of $\Phi$}
We will use cones of the form \[\Lambda_K=\Lambda^\alpha_K=\{\psi\in  C^\alpha(\skprod)\colon\psi>0\ \text{and\ } |\psi|_\alpha\leq K\inf\psi\}\cup\{0\}.\] It can be shown that $\Lambda_K$ is a closed cone in $ C^\alpha$. For these cones, we get an alternate way of calculating distances in the Hilbert metric.

\begin{lemma}
For any $\phi,\psi\in \Lambda_K$, \[A(\phi,\psi)=\inf_{z_1,z_2,z_3\in\skprod}\dfrac{Kd(z_1,z_2)^\alpha\psi(z_3)-(\psi(z_1)-\psi(z_2))}{Kd(z_1,z_2)^\alpha\phi(z_3)-(\phi(z_1)-\phi(z_2))}\] and \[B(\phi,\psi)=\sup_{z_1,z_2,z_3\in\skprod}\dfrac{Kd(z_1,z_2)^\alpha\psi(z_3)-(\psi(z_1)-\psi(z_2))}{Kd(z_1,z_2)^\alpha\phi(z_3)-(\phi(z_1)-\phi(z_2))}.\]

\label{lem:hilMet}
\end{lemma}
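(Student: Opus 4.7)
The plan is to unfold the definition $A = \sup\{t > 0 : \psi - t\phi \in \Lambda_K \cup \{0\}\}$ into a family of pointwise linear inequalities in $t$, and then invert to read off $A$ as the infimum of the displayed ratios; the formula for $B$ follows symmetrically.

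First, $\psi - t\phi \in \Lambda_K \cup \{0\}$ reduces to the single condition $|\psi - t\phi|_\alpha \leq K \inf(\psi - t\phi)$: since a H\"older seminorm is nonnegative, this forces $\inf(\psi - t\phi) \geq 0$, so pointwise positivity is automatic (equality at the infimum would force $\psi - t\phi \equiv 0$, which is still in $\Lambda_K \cup \{0\}$). Writing the seminorm condition pointwise and removing the absolute value by letting $z_1, z_2$ range over both orderings, it becomes: for all $z_1, z_2, z_3$,
\[
  \psi(z_1) - \psi(z_2) - t(\phi(z_1) - \phi(z_2)) \leq K d(z_1, z_2)^\alpha \bigl(\psi(z_3) - t\phi(z_3)\bigr),
\]
and collecting the $t$-terms on the left yields the linear inequality
\[
  t\bigl(K d(z_1, z_2)^\alpha \phi(z_3) - (\phi(z_1) - \phi(z_2))\bigr) \leq K d(z_1, z_2)^\alpha \psi(z_3) - (\psi(z_1) - \psi(z_2)).
\]
Because $\phi \in \Lambda_K$, the estimate $|\phi(z_1) - \phi(z_2)| \leq |\phi|_\alpha d(z_1, z_2)^\alpha \leq K\phi(z_3) d(z_1, z_2)^\alpha$ shows the coefficient of $t$ is $\geq 0$; the same bound for $\psi$ shows the right-hand side is $\geq 0$.

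Consequently, each triple with strictly positive coefficient contributes a constraint $t \leq \text{(ratio)}$, and the set of $t > 0$ satisfying every such constraint is bounded above by the infimum of these ratios; its supremum equals $A(\phi,\psi)$. The $B$ formula follows by applying the same argument to $t\phi - \psi \in \Lambda_K \cup \{0\}$: the signs flip so each triple instead gives a lower bound $t \geq \text{(ratio)}$, and $B$ is the infimum of admissible $t$, which equals the supremum of the ratios. The only bookkeeping point, rather than a genuine obstacle, is the triples where the coefficient of $t$ vanishes: the essential case $z_1 = z_2$ makes both sides zero and produces no constraint, while the boundary triples where $\phi$ saturates its H\"older bound at the argmin of $\phi$ still have nonnegative numerator by the estimate on $\psi$, so all such triples can be omitted from the inf and sup without affecting the result.
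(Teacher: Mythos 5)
The paper does not prove this lemma at all --- it simply cites Lemma 4.2 of Castro--Varandas \cite{CV13} --- so your argument cannot be compared to an in-paper proof; it can only be judged on its own, and it is correct. It is the standard argument one would expect behind the citation: unfold $t\phi\preceq\psi$ into the single seminorm condition $|\psi-t\phi|_\alpha\leq K\inf(\psi-t\phi)$ (your observation that strict positivity is automatic, since $\inf=0$ together with zero seminorm forces $\psi-t\phi\equiv 0$, is the right way to dispose of the positivity clause), symmetrize over $(z_1,z_2)$ to drop the absolute value, and read off a family of linear constraints $tD\leq N$ with $D,N\geq 0$ because $\phi,\psi\in\Lambda_K$. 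The resulting identifications $A=\inf N/D$ and $B=\sup N/D$ then follow, with the correct observation that triples with $D=0$ impose no constraint on $t$ in the $A$ direction. The one place where your bookkeeping is slightly loose is the $B$ direction at a degenerate triple with $D=0$ and $N>0$: there the constraint $N\leq tD$ fails for every finite $t$, so such a triple cannot simply be ``omitted'' --- it forces $B=+\infty$, which is consistent with the stated formula only under the convention $N/0=+\infty$. Since this can occur only when $\phi$ lies on the boundary of the cone (saturating $|\phi|_\alpha=K\inf\phi$ at an extremal pair), and the lemma as stated in the paper is silent on the same division-by-zero issue, this is a matter of convention rather than a gap.
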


\begin{proof}
See Lemma 4.2 in Castro and Varandas \cite{CV13}.
\end{proof}

Denote by \[\Lambda_K^x=\{\psi\in  C^\alpha(\skprod)\colon\psi_x(\cdot)>0\ \text{and\ } |\psi|_\alpha\leq K\inf\psi\}\cup\{0\}\] the cross section of $\Lambda_K$ that lives on $Y_x$.

Let \[s:=e^{\varepsilon_\varphi}\cdot\Bigg(\dfrac{(d-q)\gamma^{-\alpha}+qL^\alpha}{d}\Bigg)<1\] as in equation \eqref{eqn:cond}. We assume that $\varepsilon_\varphi>0$ is small enough that \begin{equation}\label{eqn:bound}
    \zeta:=s+2s\varepsilon_\varphi\diam(Y)^\alpha <1.
\end{equation} Then we have the following lemma based on arguments similar to Theorem 4.1 and Proposition 4.3 in \cite{CV13}. 

\begin{lemma}
With $\zeta$ as in \eqref{eqn:bound}, for all $K$ sufficiently large, we have $\LL_x(\Lambda_{K}^x)\subset\Lambda_{\zeta K}^{fx}$ for all $x\in X$. Moreover, there is a constant $M=M(K)>0$ such that for all $x\in X$, $\diam(\LL_x\Lambda^x_K)\leq M<\infty$ with respect to the Hilbert projective metric on $\Lambda_K^{fx}$.
\label{lem:ConeContraction}
\end{lemma}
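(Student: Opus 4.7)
The plan is to verify the two conclusions separately. The invariance $\LL_x(\Lambda_K^x)\subset\Lambda_{\zeta K}^{fx}$ is the main content, proved by bounding $|\LL_x\psi|_\alpha/\inf(\LL_x\psi)$ directly. The diameter bound then follows softly from the strict inclusion $\Lambda_{\zeta K}^{fx}\subset\Lambda_K^{fx}$, which holds because $\zeta<1$ by \eqref{eqn:bound}.

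For invariance, given $\psi\in\Lambda_K^x$ and $y,y'\in Y_{fx}$, I would apply Lemma \ref{lem:FibPreimage} to pair preimages $\{\bar y_k\}_{k=1}^d$ of $y$ with $\{\bar y_k'\}_{k=1}^d$ of $y'$ so that $d(\bar y_k,\bar y_k')\leq L\, d(y,y')$ for $k\leq q$ and $d(\bar y_k,\bar y_k')\leq \gamma^{-1}d(y,y')$ for $k>q$. Writing
\[
\LL_x\psi(y)-\LL_x\psi(y')=\sum_k e^{\varphi_x(\bar y_k)}\bigl(\psi_x(\bar y_k)-\psi_x(\bar y_k')\bigr)+\sum_k \psi_x(\bar y_k')\bigl(e^{\varphi_x(\bar y_k)}-e^{\varphi_x(\bar y_k')}\bigr),
\]
I would control the first sum using $|\psi|_\alpha\leq K\inf\psi$ and $e^{\sup\varphi}\leq e^{\varepsilon_\varphi}e^{\inf\varphi}$ from \eqref{ass:pot}, and the second using $|e^\varphi|_\alpha\leq \varepsilon_\varphi e^{\inf\varphi}$ together with $\sup\psi\leq (1+K\diam(Y)^\alpha)\inf\psi$. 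Collecting the geometric weights via $\sum_k d(\bar y_k,\bar y_k')^\alpha\leq (qL^\alpha+(d-q)\gamma^{-\alpha})d(y,y')^\alpha$ and dividing by the lower bound $\inf(\LL_x\psi)\geq d\,e^{\inf\varphi}\inf\psi$ yields
\[
\frac{|\LL_x\psi|_\alpha}{\inf(\LL_x\psi)}\leq sK+\varepsilon_\varphi s\bigl(1+K\diam(Y)^\alpha\bigr).
\]
Comparing to $\zeta K=sK+2s\varepsilon_\varphi K\diam(Y)^\alpha$, the required inequality $|\LL_x\psi|_\alpha\leq \zeta K\inf(\LL_x\psi)$ holds as soon as $K\geq 1/\diam(Y)^\alpha$, which is the ``$K$ sufficiently large'' hypothesis.

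For the diameter bound, I fix $\phi,\psi\in\Lambda_{\zeta K}^{fx}$ and estimate $A(\phi,\psi)$ from Lemma \ref{lem:hilMet} by testing $t=(1-\epsilon)\inf\psi/\sup\phi$: then $\inf(\psi-t\phi)\geq\epsilon\inf\psi$ and $|\psi-t\phi|_\alpha\leq \zeta K(\inf\psi+t\inf\phi)\leq 2\zeta K\inf\psi$, so $\psi-t\phi\in\Lambda_K^{fx}$ whenever $\epsilon\geq 2\zeta/(1+\zeta)$. This gives $A(\phi,\psi)\geq\tfrac{1-\zeta}{1+\zeta}\cdot\tfrac{\inf\psi}{\sup\phi}$ and symmetrically $B(\phi,\psi)\leq\tfrac{1+\zeta}{1-\zeta}\cdot\tfrac{\sup\psi}{\inf\phi}$, hence
\[
\Theta(\phi,\psi)\leq 2\log\tfrac{1+\zeta}{1-\zeta}+2\log\bigl(1+\zeta K\diam(Y)^\alpha\bigr)=:M(K),
\]
using $\sup\psi/\inf\psi\leq 1+\zeta K\diam(Y)^\alpha$ for elements of $\Lambda_{\zeta K}^{fx}$. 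This $M(K)$ depends only on $K$, $\zeta$, and $\diam(Y)$, hence is uniform in $x$.

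The main obstacle is the H\"older estimate in the invariance step. Non-uniform expansion forces the $q$ preimages landing in $\mathcal{A}$ to contribute a $qL^\alpha$ factor rather than a truly contractive one, and it is precisely condition \eqref{eqn:cond} that keeps $s<1$, so the linear-in-$K$ coefficient of the bound stays below $1$. The extra slack $2s\varepsilon_\varphi\diam(Y)^\alpha$ built into $\zeta$ beyond $s$ is what absorbs the nonlinear $\sup\psi/\inf\psi$ term produced by the variation of $e^\varphi$ in the second sum; if this slack were absent, one would only obtain $\LL_x\Lambda_K^x\subset\Lambda_{sK+O(1)}^{fx}$, and the inductive cone-contraction argument needed later would fail.
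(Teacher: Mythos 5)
Your proof is correct and takes essentially the same route as the paper: the invariance step is the identical decomposition of $\LL_x\psi(y)-\LL_x\psi(y')$ into the two sums controlled by $|\psi|_\alpha\leq K\inf\psi$, $|e^\varphi|_\alpha\leq\varepsilon_\varphi e^{\inf\varphi}$, and the preimage pairing of Lemma \ref{lem:FibPreimage}, with the same largeness condition $K\diam(Y)^\alpha\geq 1$ absorbing the $\sup\psi/\inf\psi$ term into $2s\varepsilon_\varphi K\diam(Y)^\alpha$; and your diameter estimate lands on the paper's exact constant $M(K)=2\log\tfrac{1+\zeta}{1-\zeta}+2\log(1+\zeta K\diam(Y)^\alpha)$, merely verifying $A$ and $B$ directly from the cone order instead of invoking Lemma \ref{lem:hilMet}. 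One small arithmetic point in the diameter step: to reach the threshold $\epsilon\geq 2\zeta/(1+\zeta)$ you should retain the bound $|\psi-t\phi|_\alpha\leq(2-\epsilon)\zeta K\inf\psi$ rather than weakening it to $2\zeta K\inf\psi$, since the weaker bound only gives $\epsilon\geq 2\zeta$, which is unusable when $\zeta\geq 1/2$ (and \eqref{eqn:bound} only guarantees $\zeta<1$).
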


\begin{proof}
Fix $x\in X$ and $K>0$. Denote by $\{y_k\}$ and $\{y_k'\}$ the preimages of $y$ and $y'$ in $Y_x$, respectively, as given by Lemma \ref{lem:FibPreimage}. Now fix $\psi\in\Lambda_K$. Since $\inf\LL_x\psi\geq de^{\inf\varphi}\inf\psi$ and $$\LL_{x}\psi(fx,y)-\LL_{x}\psi(fx,y') = \sum_{k=1}^d\Big(e^{{\varphi}(x,y_k)}(\psi(x,y_k)-\psi(x,y_k'))+(e^{{\varphi}(x,{y_k})}-e^{{\varphi}(x,y_k')})\psi(x,y_k')\Big),$$ we have \begin{align*}
    \dfrac{|\LL_x\psi(fx,y)-\LL_x\psi(fx,y')|}{\inf\LL_x\psi}&\leq d^{-1}\sum_{k=1}^d e^{\varphi(x,y_k)-\inf\varphi}\big|\psi(x,y_k)-\psi(x,y_k')\big|(\inf\psi)^{-1}\\&\quad+d^{-1}\sum_{k=1}^d(\sup\psi/\inf\psi)e^{-\inf\varphi}\big|e^{\varphi(x,y_k)}-e^{\varphi(x,y_k')}\big|=:I_1+I_2
\end{align*}

Note that $\big|\psi(x,y_k)-\psi(x,y_k')\big|\leq|\psi|_\alpha d(y_k,y'_k)^\alpha\leq K\inf\psi d(y_k,y'_k)^\alpha$. By Lemma \ref{lem:FibPreimage}, $d(y_k,y_k')\leq Ld(y,y')$ for any $1\leq k\leq q $ and $d(y_k,y_k')\leq \gamma ^{-1}d(y,y')$ for $q<k\leq d$ so \begin{align*}
    I_1&\leq d^{-1}\sum_{k=1}^d e^{\varphi(x,y_k)-\inf\varphi}Kd(y_k,y'_k)^\alpha\\
    &\leq d^{-1}e^{\varepsilon_\varphi}K\sum_{k=1}^d d(y_k,y'_k)^\alpha\\
    &\leq K e^{\varepsilon_\varphi}d^{-1}(L^\alpha q+(d-q)\gamma^{-\alpha})d(y,y')^\alpha\\
    &\leq sKd(y,y')^\alpha
\end{align*} where the second inequality holds by \eqref{ass:pot}.

To estimate $I_2$, note that $\big|e^{\varphi(x,y_k)}-e^{\varphi(x,y_k')}\big|\leq |e^{\varphi_x}|_\alpha d(x,y_k),(x,y_k'))^\alpha$ and \[\sup\psi\leq\inf\psi+|\psi|_\alpha\diam(Y)^\alpha\leq(1+K\diam(Y)^\alpha)\inf\psi\] implies that \[\sup\psi/\inf\psi\leq 1+K\diam(Y)^\alpha\leq 2K\diam(Y)^\alpha\] provided that $K$ is sufficiently large. Then \eqref{ass:pot} implies that \begin{align*}
    I_2&\leq 2K\diam(Y)^\alpha e^{-\inf\varphi}d^{-1}\sum_{k=1}^d|e^{\varphi_x}|_\alpha d(x,y_k),(x,y_k'))^\alpha\\
    &\leq 2K\diam(Y)^\alpha\varepsilon_\varphi d^{-1}\sum_{k=1}^d (L^\alpha q+(d-q)\gamma^{-\alpha})d(y,y')^\alpha\\
    &\leq 2K\diam(Y)^\alpha s\varepsilon_\varphi d(y,y')^\alpha\\
    &\leq 2s\varepsilon_\varphi\diam(Y)^\alpha K d(y,y')^\alpha
\end{align*} Therefore, if we let $\zeta:=s+2s\varepsilon_\varphi\diam(Y)^\alpha$, we have that \[|\LL_x\psi|_\alpha\leq (s+2s\varepsilon_\varphi\diam(Y)^\alpha)K\inf\LL_x\psi\leq \zeta K\inf\LL_x\psi\] so $\LL_x\psi\in\Lambda^{fx}_{\zeta K}$. 

Note that $\sup\LL_x\psi\leq(1+\zeta K(\diam Y)^\alpha)\inf\LL_x\psi$. Let $y_1,y_2,y_3\in Y$. Then since $|\LL_x\psi|_\alpha\leq\zeta K\inf\LL_x\psi$, we have \begin{align*}
    \dfrac{Kd(y_1,y_2)^\alpha\LL_x\psi(y_3)-(\LL_x\psi(y_1)-\LL_x\psi(y_2))}{Kd(y_1,y_2)^\alpha\LL_x\phi(y_3)-(\LL_x\phi(y_1)-\LL_x\phi(y_2))}&\leq\dfrac{(K\sup\LL_x\psi+\zeta K\inf\LL_x\psi)d(y_1,y_2)^\alpha}{(K\inf\LL_x\phi-\zeta K\inf\LL_x\phi)d(y_1,y_2)^\alpha}.
\end{align*} Thus, $B(\LL_x\psi,\LL_x\phi)\leq \frac{K\sup\LL_x\psi+\zeta K\inf\LL_x\psi}{K\inf\LL_x\phi-\zeta K\inf\LL_x\phi}$. A similar calculation gives a lower bound on $A(\LL_x\psi,\LL_x\phi)$. So by Lemma \ref{lem:hilMet}, we have \begin{align*}
\Theta(\LL_x\psi,\LL_x\phi)
    &\leq \log\Bigg(\dfrac{K\sup\LL_x\phi+\zeta K\inf\LL_x\phi}{K\inf\LL_x\phi-\zeta K\inf\LL_x\phi}\cdot\dfrac{K\sup\LL_x\psi+\zeta K\inf\LL_x\psi}{K\inf\LL_x\psi-\zeta K\inf\LL_x\psi}\Bigg)\\
    &\leq \log\Bigg(\dfrac{K(1+\zeta K\diam(Y)^\alpha)(1+\zeta)\inf\LL_x\phi}{K(1-\zeta)\inf\LL_x\phi}\Bigg)\\&\hspace{3cm}+\log\Bigg(\dfrac{K(1+\zeta K\diam(Y)^\alpha)(1+\zeta)\inf\LL_x\psi}{K(1-\zeta)\inf\LL_x\psi}\Bigg)\\
    &\leq 2\log\Bigg(\dfrac{1+\zeta}{1-\zeta}\Bigg)+2\log(1+\zeta K\diam(Y)^\alpha)<\infty
\end{align*} This proves the existence of $M$.
\end{proof}

\begin{theorem}
\label{thm:potential}
Let $\Phi^{\sigma}_n(x)=\log\dfrac{\langle\LL_x^{n+1}\mathbbm{1},\sigma\rangle}{\langle\LL_{fx}^{n}\mathbbm{1},\sigma\rangle}.$ There exists $0<\tau<1$ and $C_1>0$ such that for all $k\in\NN,\ n,m\geq k,\ x\in X$, and any probability measures $\sigma_n$ on $Y_{f^nx}$ and $\sigma_m$ on $Y_{f^mx}$, we have \[\big|\Phi_n^{\sigma_n}(x)-\Phi_m^{\sigma_m}(x)\big|\leq C_1\ \tau^k.\] Thus, $\Phi(x)=\lim_{n\to\infty}\Phi_n^{\sigma_n}(x)$ exists  and $\big|\Phi_n^{\sigma_n}(x)-\Phi(x)\big|\leq C_1\tau^n$.
\end{theorem}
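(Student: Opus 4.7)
My plan is to write $\Phi_n^{\sigma}(x)$ as the logarithm of a ratio of pairings of $\sigma$ against the two iterates $\LL_{fx}^n\mathbbm{1}$ and $\LL_{fx}^n(\LL_x\mathbbm{1})=\LL_x^{n+1}\mathbbm{1}$ on $Y_{f^{n+1}x}$, and then control the ratio using the Hilbert projective metric via Lemma~\ref{lem:ConeContraction} and the Birkhoff contraction theorem. For $K$ large, both $\mathbbm{1}$ and $\LL_x\mathbbm{1}$ lie in $\Lambda_K^{fx}$ (the first trivially, the second by Lemma~\ref{lem:ConeContraction}). One further application of $\LL_{fx}$ puts them into the sub-cone $\Lambda_{\zeta K}^{f^2x}$, whose Hilbert diameter inside $\Lambda_K^{f^2x}$ is at most $M$ by the same lemma. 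Each subsequent iteration then contracts the Hilbert distance by the factor $\theta:=\tanh(M/4)<1$ via Theorem~\ref{thm: Birkhoff}, uniformly in the base point, giving
\[
\Theta_{\Lambda_K^{f^{n+1}x}}\bigl(\LL_x^{n+1}\mathbbm{1},\,\LL_{fx}^n\mathbbm{1}\bigr)\leq M\theta^{n-1}\qquad(n\geq 1).
\]

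Elements of $\Lambda_K$ are nonnegative, so every probability measure on $Y_{f^{n+1}x}$ belongs to $\Lambda_K^*$. Plugging two such measures $\sigma,\eta$ into the dual characterization of the Hilbert metric in Lemma~\ref{sec:hilMet} and taking logarithms then yields
\[
|\Phi_n^{\sigma}(x)-\Phi_n^{\eta}(x)|\leq M\theta^{n-1}.
\]
To compare different horizons $m\geq n\geq k$, I will use the semigroup identities $\LL_x^{m+1}=\LL_{f^{n+1}x}^{m-n}\LL_x^{n+1}$ and $\LL_{fx}^m=\LL_{f^{n+1}x}^{m-n}\LL_{fx}^n$. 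Dualizing, both $\langle\LL_x^{m+1}\mathbbm{1},\sigma_m\rangle$ and $\langle\LL_{fx}^m\mathbbm{1},\sigma_m\rangle$ become pairings against the (generally non-probability) measure $\tau_m:=(\LL_{f^{n+1}x}^{m-n})^*\sigma_m$ on $Y_{f^{n+1}x}$; the total mass $\|\tau_m\|$ cancels between numerator and denominator in the ratio, producing $\Phi_m^{\sigma_m}(x)=\Phi_n^{\tilde\sigma_m}(x)$ for the probability measure $\tilde\sigma_m:=\tau_m/\|\tau_m\|$. Combining,
\[
|\Phi_n^{\sigma_n}(x)-\Phi_m^{\sigma_m}(x)|=|\Phi_n^{\sigma_n}(x)-\Phi_n^{\tilde\sigma_m}(x)|\leq M\theta^{n-1}\leq(M/\theta)\theta^k,
\]
which is the desired Cauchy estimate with $C_1=M/\theta$ and $\tau=\theta$. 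Letting $m\to\infty$ then produces $\Phi(x):=\lim_n\Phi_n^{\sigma_n}(x)$ together with the final bound $|\Phi_n^{\sigma_n}(x)-\Phi(x)|\leq C_1\tau^n$.

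I expect the main obstacle to be bookkeeping -- tracking which fiber each function or measure inhabits after every application of $\LL$ or $\LL^*$, and verifying that the diameter $M$ from Lemma~\ref{lem:ConeContraction} is uniform in the base point so that the Birkhoff contraction rate $\theta$ is uniform along orbits. The simplification that makes the argument clean is that the constant function $\mathbbm{1}$ already lies in $\Lambda_K$, so only a single preliminary application of $\LL$ is needed before the iterates enter a sub-cone of bounded Hilbert diameter.
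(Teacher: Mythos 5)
Your proposal is correct and follows essentially the same route as the paper: both arguments place $\mathbbm{1}$ and $\LL_x\mathbbm{1}$ in the cone $\Lambda_K$, iterate the fiberwise operators and apply Lemma~\ref{lem:ConeContraction} together with the Birkhoff contraction theorem to get $\Theta(\LL_x^{n+1}\mathbbm{1},\LL_{fx}^n\mathbbm{1})\leq M\tau^{n-1}$, and then use the dual characterization of the Hilbert metric (Lemma~\ref{sec:hilMet}) plus the semigroup/pullback identity for the measures to obtain the Cauchy estimate with $C_1=M/\tau$. The only cosmetic difference is that the paper pulls both measures back to the level-$k$ fiber and compares there, whereas you pull the level-$m$ measure back to level $n$; the estimates are identical.
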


\begin{proof}
Fix $x\in X$. Suppose $n,m\geq k \geq 1$. Then \begin{align*}
\big|\Phi_n^{\sigma_n}(x)-\Phi_m^{\sigma_m}(x)\big|&=\Bigg|\log\dfrac{\langle\LL_x^{n+1}\mathbbm{1},\sigma_n\rangle}{\langle\LL_{fx}^{n}\mathbbm{1},\sigma_n\rangle}-\log\dfrac{\langle\LL_x^{m+1}\mathbbm{1},\sigma_m\rangle}{\langle\LL_{fx}^{m}\mathbbm{1},\sigma_m\rangle}\Bigg| \\
&=\Bigg|\log\dfrac{\langle\LL_{fx}^{k-1}(\LL_{x}\mathbbm{1}),\sigma_{fx,n}\rangle\langle\LL_{fx}^{k-1}\mathbbm{1},\sigma_{fx,m}\rangle}{\langle\LL_{fx}^{k-1}\mathbbm{1},\sigma_{fx,n}\rangle\langle\LL_{fx}^{k-1}(\LL_{x}\mathbbm{1}),\sigma_{fx,m}\rangle}\Bigg|
\end{align*} where $\sigma_{fx,n}=(\LL_{f^{k+1}x})^*\cdots(\LL_{f^nx})^*\sigma_n$. By Lemma~\ref{sec:hilMet}, we see that \[\big|\Phi_n^{\sigma_n}(x)-\Phi_m^{\sigma_m}(x)\big|\leq \Theta(\LL_{fx}^{k-1}(\LL_{x}\mathbbm{1}),\LL_{fx}^{k-1}\mathbbm{1}).\] Clearly, $\mathbbm{1}\in \Lambda^x_K$ for any $K>0$.  Then $\LL_{x}\mathbbm{1}\in \Lambda^{fx}_{\zeta K}$ by Lemma \ref{lem:ConeContraction}. Fix $K$ large and $M$ as in Lemma~\ref{lem:ConeContraction}. Set $\tau=\tanh{(M/4)}$. By Theorem~\ref{thm: Birkhoff}, we have \begin{align*}
    \Theta_{}(\LL_{fx}^{k-1}(\LL_{x}\mathbbm{1}),\LL_{fx}^{k-1}\mathbbm{1})&\leq \tau^{k-1}\Theta_{}((\LL_{x}\mathbbm{1}),\mathbbm{1})\leq \tau^{k-1}M.
\end{align*} Let $C_1=M/\tau$. Hence, the sequence $\{\Phi_n\}_{n\geq 0}$ is Cauchy and the limit exists at every $x\in X$.
\end{proof}

This proves the existence of $\Phi$.
\subsection{Fiber measures}
To completely understand the equilibrium state $\mu$ on $(\skprod,F)$, we need to understand how it gives weight to the fibers $\{Y_x\}_{x\in X}$. The first step is the following nonstationary Ruelle-Perron-Frobenius theorem adapted from \cite{CH}, whose proof we include here for completeness (see Hafouta \cite{H20} for a similar result when the base is invertible).

\begin{theorem}
\label{thm:fibMeas}
Let $F\colon\skprod\to\skprod$ satisfy \ref{ass:A1} and \ref{ass:A2}. For any H\"older $\varphi\colon \skprod\to\RR$ satisfying \eqref{ass:pot} and its associated family of fiberwise transfer operators $\{\LL_{x}\}_{x\in X}$, there exists a unique family of probability measures $\nu_x\in\MM(Y_x)$ such that for all $x\in X$, $\LL_{x}^*\nu_{fx}=\lambda_x\nu_x\quad \text{where\ } \lambda_x=\nu_{fx}(\LL_{x}\mathbbm{1})=e^{\Phi(x)}.$
\end{theorem}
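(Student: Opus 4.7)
The strategy is to construct $\nu_x$ as a weak-$*$ limit of normalized pullbacks of reference measures, in direct analogy with the proof of Theorem \ref{thm:potential}, and to read off the eigenvalue from the defining formula for $\Phi$. For every $x\in X$ and every choice of probability measures $\sigma_n$ on $Y_{f^nx}$, I would define
\[
\nu_x(\psi) \;:=\; \lim_{n\to\infty}\frac{\langle\LL_x^{n}\psi,\sigma_n\rangle}{\langle\LL_x^{n}\mathbbm{1},\sigma_n\rangle}
\]
for $\psi\in C^\alpha(Y_x)$. Because $\mathbbm{1}\in\Lambda_K^x$ trivially, Lemma \ref{lem:ConeContraction} places $\LL_x^n\mathbbm{1}$ inside $\Lambda_K^{f^nx}$ with Hilbert diameter bounded by $M$; the dual characterization in Lemma \ref{sec:hilMet} together with Birkhoff's Theorem \ref{thm: Birkhoff} then shows that for any $\psi\in\Lambda_K^x$ the ratios above form a Cauchy sequence at exponential rate $\tau^n$, with limit independent of the choice of $\{\sigma_n\}$. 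A general $\psi\in C^\alpha(Y_x)$ becomes a cone element after adding a sufficiently large constant, so linearity extends the limit to all of $C^\alpha(Y_x)$; boundedness by $\sup|\psi|$ and density of $C^\alpha(Y_x)$ in $C(Y_x)$ then upgrade this to a positive linear functional of unit mass, hence a Borel probability measure by Riesz representation.

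To verify the eigenvalue relation, I would expand
\[
\langle\psi,\LL_x^*\nu_{fx}\rangle \;=\; \langle\LL_x\psi,\nu_{fx}\rangle \;=\; \lim_{n\to\infty}\frac{\langle\LL_x^{n+1}\psi,\sigma_{n}\rangle}{\langle\LL_{fx}^{n}\mathbbm{1},\sigma_{n}\rangle},
\]
where now $\sigma_n$ is a probability measure on $Y_{f^{n+1}x}$, and then multiply and divide by $\langle\LL_x^{n+1}\mathbbm{1},\sigma_n\rangle$ to rewrite the right-hand side as $\tfrac{\langle\LL_x^{n+1}\psi,\sigma_n\rangle}{\langle\LL_x^{n+1}\mathbbm{1},\sigma_n\rangle}\cdot e^{\Phi_n^{\sigma_n}(x)}$. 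The first factor tends to $\nu_x(\psi)$ by construction and the second to $e^{\Phi(x)}$ by Theorem \ref{thm:potential}, yielding $\LL_x^*\nu_{fx}=e^{\Phi(x)}\nu_x$. Testing against $\psi\equiv\mathbbm{1}$ identifies $\lambda_x=\nu_{fx}(\LL_x\mathbbm{1})=e^{\Phi(x)}$.

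For uniqueness, suppose $\{\nu_x'\}$ is a second family satisfying $\LL_x^*\nu_{fx}'=\lambda_x'\nu_x'$ with $\nu_x'(Y_x)=1$. Iterating the eigen-equation gives $(\LL_x^{n})^*\nu_{f^nx}'=\bigl(\prod_{k=0}^{n-1}\lambda_{f^kx}'\bigr)\nu_x'$, and pairing with $\psi$ and $\mathbbm{1}$ and taking the ratio cancels the eigenvalue product, so
\[
\nu_x'(\psi)=\frac{\langle\LL_x^{n}\psi,\nu_{f^nx}'\rangle}{\langle\LL_x^{n}\mathbbm{1},\nu_{f^nx}'\rangle}
\qquad \text{for every } n\geq 1.
\]
Choosing $\sigma_n:=\nu_{f^nx}'$ in the limit defining $\nu_x$ then forces $\nu_x'=\nu_x$.

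The main obstacle I anticipate is the extension step: the Hilbert metric only controls logarithmic ratios of pairings against cone elements, so I must verify that these bounds are uniform enough in $\sigma$ to transfer through the decomposition of a general Hölder function into cone elements and to guarantee the resulting functional is both positive and of unit mass before invoking Riesz. A secondary issue is the index bookkeeping in the eigenvalue calculation, since a mismatch between the fiber on which $\sigma_n$ lives and the number of iterates of $\LL$ applied is where off-by-one errors would most easily enter.
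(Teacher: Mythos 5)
Your proposal follows essentially the same route as the paper: the paper also defines $\nu_x$ as the limit of the normalized pullbacks $(\LL_x^n)^*\sigma_n/\langle\mathbbm{1},(\LL_x^n)^*\sigma_n\rangle$, derives the Cauchy property from the cone contraction of Lemma \ref{lem:ConeContraction} (via the ratio bounds $\inf_y \LL_x^k\psi/\LL_x^k\mathbbm{1}$ and $\sup_y \LL_x^k\psi/\LL_x^k\mathbbm{1}$, which is the same Hilbert-metric estimate you invoke), and obtains the eigen-equation by exactly the multiply-and-divide computation you describe. Your explicit uniqueness argument (choosing $\sigma_n=\nu'_{f^nx}$) is a welcome spelling-out of what the paper leaves implicit in the independence of the limit from $\{\sigma_n\}$, and the extension issue you flag is handled in the paper with the same level of brevity.
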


Theorem \ref{thm:fibMeas} is a consequence of the following two propositions.

\begin{proposition}\label{thm:fibEigMeas}
Given any $x\in X$, $n\in\NN$, and $\sigma_n\in\mathcal{M}(Y_{f^nx})$, define $\nu_{x,n}\in\mathcal{M}(Y_{x})$ by $\nu_{x,n}=\dfrac{(\mathcal{L}_x^n)^*\sigma_n}{\langle\mathbbm{1},(\mathcal{L}_x^n)^*\sigma_n\rangle}$. Then exists $C_1>0$ such that with $\tau\in(0,1)$ as in Theorem \ref{thm:potential}, for all $k\in\NN$, $m,n\geq k$, $\psi\in\Lambda_K$, we have \[\Big|\int\psi d\nu_{x,n}-\int\psi d\nu_{x,m}\Big|\leq C_1\|\psi\|\tau^k.\] In particular, $\langle\psi,\nu_x\rangle:=\lim_{n\to\infty}\langle\psi,\nu_{x,n}\rangle$ exists and defines a probability measure $\nu_x$ on $Y_x$ with \[\Big|\int\psi d\nu_{x,n}-\int\psi d\nu_x\Big|\leq C_1\|\psi\|\tau^n.\]
\end{proposition}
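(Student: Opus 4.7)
The plan is to adapt the argument from Theorem \ref{thm:potential}, rewriting the difference of the two integrals as a cross-ratio of dual pairings and controlling it via the Hilbert projective metric together with Birkhoff's contraction theorem. By duality,
\[
\int\psi\,d\nu_{x,n}=\frac{\langle\psi,(\LL_x^n)^*\sigma_n\rangle}{\langle\mathbbm{1},(\LL_x^n)^*\sigma_n\rangle}=\frac{\langle\LL_x^n\psi,\sigma_n\rangle}{\langle\LL_x^n\mathbbm{1},\sigma_n\rangle}.
\]
For $n,m\ge k$, introduce the normalized pullback probability measures
\[
\eta_{k,n}:=\frac{(\LL_{f^kx}^{n-k})^*\sigma_n}{\langle\mathbbm{1},(\LL_{f^kx}^{n-k})^*\sigma_n\rangle}\in\MM(Y_{f^kx}),
\]
and define $\eta_{k,m}\in\MM(Y_{f^kx})$ analogously. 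Since the same normalizing constant appears in numerator and denominator, it cancels, yielding
\[
\int\psi\,d\nu_{x,n}=\frac{\langle\LL_x^k\psi,\eta_{k,n}\rangle}{\langle\LL_x^k\mathbbm{1},\eta_{k,n}\rangle},\qquad \int\psi\,d\nu_{x,m}=\frac{\langle\LL_x^k\psi,\eta_{k,m}\rangle}{\langle\LL_x^k\mathbbm{1},\eta_{k,m}\rangle}.
\]

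Because $\zeta<1$, Lemma \ref{lem:ConeContraction} gives $\LL_x(\Lambda_K^x)\subset\Lambda_{\zeta K}^{fx}\subset\Lambda_K^{fx}$ with $\diam(\LL_x\Lambda_K^x)\le M$ in the Hilbert metric on $\Lambda_K^{fx}$, so in particular $\Theta_{\Lambda_K^{fx}}(\LL_x\psi,\LL_x\mathbbm{1})\le M$. Iterating Birkhoff's theorem (Theorem \ref{thm: Birkhoff}) along the chain $\LL_{f^jx}\colon\Lambda_K^{f^jx}\to\Lambda_K^{f^{j+1}x}$, each step contracting by $\tau=\tanh(M/4)$, produces
\[
\Theta_{\Lambda_K^{f^kx}}(\LL_x^k\psi,\LL_x^k\mathbbm{1})\le\tau^{k-1}\Theta_{\Lambda_K^{fx}}(\LL_x\psi,\LL_x\mathbbm{1})\le M\tau^{k-1}.
\]
Applying the cross-ratio characterization of Lemma \ref{sec:hilMet} to the probability measures $\eta_{k,n},\eta_{k,m}\in\Lambda_K^{f^kx,*}$ bounds
\[
\bigg|\log\frac{\int\psi\,d\nu_{x,n}}{\int\psi\,d\nu_{x,m}}\bigg|\le\Theta_{\Lambda_K^{f^kx}}(\LL_x^k\psi,\LL_x^k\mathbbm{1})\le M\tau^{k-1},
\]
and combining this with $|e^t-1|\le 2|t|$ for small $t$, together with the crude bound $\int\psi\,d\nu_{x,m}\le\|\psi\|$, yields the desired inequality $|\int\psi\,d\nu_{x,n}-\int\psi\,d\nu_{x,m}|\le C_1\|\psi\|\tau^k$ upon absorbing the factor $2M/\tau$ into $C_1$.

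To extract an actual limit probability measure, I would extend the Cauchy estimate from $\Lambda_K$ to all H\"older $\psi$ by decomposing $\psi=(\psi+c)-c$ with $c>0$ large enough that both summands lie in a suitably enlarged cone; the constant part pairs identically against every probability measure and drops out. Since H\"older functions are uniformly dense in $C(Y_x)$ and each $\nu_{x,n}$ has mass one, the map $\psi\mapsto\lim_n\langle\psi,\nu_{x,n}\rangle$ defines a positive linear functional of norm one on $C(Y_x)$, identified by Riesz representation with a probability measure $\nu_x\in\MM(Y_x)$; sending $m\to\infty$ in the Cauchy bound yields the stated rate $|\int\psi\,d\nu_{x,n}-\int\psi\,d\nu_x|\le C_1\|\psi\|\tau^n$. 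The principal obstacle is the non-stationary cone bookkeeping, since Birkhoff must be iterated along a sequence of \emph{different} fiber cones $\Lambda_K^{f^jx}$; this relies crucially on the uniformity in $x$ of the diameter and contraction bounds in Lemma \ref{lem:ConeContraction}.
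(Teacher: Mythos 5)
Your proposal is correct and follows essentially the same route as the paper: both arguments reduce to the estimate $\Theta(\LL_x^k\psi,\LL_x^k\mathbbm{1})\le M\tau^{k-1}$ obtained from Lemma \ref{lem:ConeContraction} and Birkhoff's theorem, and then convert this projective bound into a bound on the difference of the normalized integrals using $\langle\psi,\nu_{x,m}\rangle\le\|\psi\|$. The only cosmetic difference is that you invoke the cross-ratio formula of Lemma \ref{sec:hilMet} with the two pullback measures $\eta_{k,n},\eta_{k,m}$, whereas the paper sandwiches $\langle\psi,\nu_{x,n}\rangle$ between $b_k=\inf_y\LL_x^k\psi/\LL_x^k\mathbbm{1}$ and $c_k=\sup_y\LL_x^k\psi/\LL_x^k\mathbbm{1}$; these are equivalent, and your closing Riesz-representation step is if anything more careful than the paper's.
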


\begin{proof}
Note that $\langle\psi,\nu_{x,n}\rangle=\dfrac{\langle\psi,(\LL^n_x)^*\sigma_n\rangle}{\langle\mathbbm{1},(\LL^n_x)^*\sigma_n\rangle}$. Let $b_k=\inf_y\dfrac{\LL^k_x\psi(y)}{\LL^k_x\mathbbm{1}(y)}$ and $c_k=\sup_y\dfrac{\LL^k_x\psi(y)}{\LL^k_x\mathbbm{1}(y)}$. 
Note that $\LL_x^k\psi\leq c_k\LL_x^k\mathbbm{1}$. So \[\langle\psi,\nu_{x,n}\rangle=\dfrac{\langle\psi,(\LL_x^n)^*\sigma_{n}\rangle}{\langle\mathbbm{1},(\LL_x^n)^*\sigma_{n}\rangle}=\dfrac{\langle\LL_x^k\psi,(\LL_x^{n-k})^*\sigma_{n}\rangle}{\langle\LL_x^k\mathbbm{1},(\LL_x^{n-k})^*\sigma_{n}\rangle}\leq\dfrac{c_k\langle\LL_x^k\mathbbm{1},(\LL_x^{n-k})^*\sigma_{n}\rangle}{\langle\LL_x^k\mathbbm{1},(\LL_x^{n-k})^*\sigma_{n}\rangle}=c_k.\] A similar computation shows that $b_k\leq\langle\psi,\nu_{x,n}\rangle$.  Then $b_k\leq\langle\psi,\nu_{x,n}\rangle\leq c_k$ for all $n\geq k$. Therefore, $|\langle\psi,\nu_{x,n}\rangle-\langle\psi,\nu_{x,m}\rangle|\leq c_k-b_k$ for all $n,m\geq k$. Lemma \ref{lem:ConeContraction} implies that $\Theta(\LL^k_x\psi,\LL^k_x\mathbbm{1})\leq\diam(\LL_x\Lambda_K)\tau^{k-1}\leq M\tau^{k-1}$. So $1\leq\dfrac{c_k}{b_k}\leq e^{M\tau^{k-1}}$. Thus, $b_k\leq c_k\leq b_ke^{M\tau^{k-1}}$ which implies that $c_k-b_k\leq b_k(e^{M\tau^{k-1}}-1)$. Moreover, for all $y\in Y$, we have \[\LL_x^k\psi(y)=\sum_{\overline{y}\in g_x^{-k}(y)}e^{S_k\varphi(x,\overline{y})}\psi(x,\overline{y})\leq\sum_{\overline{y}\in g_x^{-k}(y)}e^{S_k\varphi(x,\overline{y})}\|\psi\|=\|\psi\|\LL_x^k\mathbbm{1}(y).\] So $b_k\leq\|\psi\|$. Hence, \[|\langle\psi,\nu_{x,n}\rangle-\langle\psi,\nu_{x,m}\rangle|\leq c_k-b_k\leq\|\psi\|(e^{M\tau^{k-1}}-1).\] Thus, $\{\nu_{x,n}\}$ is a Cauchy sequence. Then there is a constant $C_1>0$ and \[|\langle\psi,\nu_{x,n}\rangle-\langle\psi,\nu_{x}\rangle|\leq C_1\|\psi\|\tau^n\] for all $n\geq 0$.
\end{proof}

\begin{proposition}
  Let $\{\nu_x\}$ be as in Proposition \ref{thm:fibEigMeas}. Then $\mathcal{L}_x^*\nu_{fx}=e^{\Phi(x)}\nu_x$.  
\end{proposition}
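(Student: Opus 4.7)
The plan is to deduce the eigenequation from the defining relation $\nu_{x,n} = (\mathcal{L}_x^n)^*\sigma / \langle \mathbbm{1}, (\mathcal{L}_x^n)^*\sigma\rangle$ by splitting the $n$-fold operator as $\mathcal{L}_x^n = \mathcal{L}_{fx}^{n-1}\circ\mathcal{L}_x$ and identifying the resulting normalization ratio with $\exp \Phi_{n-1}^{\sigma}(x)$. Once this finite-$n$ identity is in hand, the statement follows by passing to the limit using Proposition~\ref{thm:fibEigMeas} and Theorem~\ref{thm:potential}.

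First, fix $x\in X$ and choose a common probability measure $\sigma$ on the terminal fiber, so that both $\nu_{x,n}$ and $\nu_{fx,n-1}$ are defined from it. The adjoint factorization $(\mathcal{L}_x^n)^*\sigma = \mathcal{L}_x^*(\mathcal{L}_{fx}^{n-1})^*\sigma$, together with $\langle\mathbbm{1},\mathcal{L}_x^*\rho\rangle = \langle\mathcal{L}_x\mathbbm{1},\rho\rangle$, gives
\[
\nu_{x,n} \;=\; \frac{\mathcal{L}_x^*\nu_{fx,n-1}}{\langle \mathcal{L}_x\mathbbm{1},\nu_{fx,n-1}\rangle}.
\]
Unwinding $\nu_{fx,n-1}$, the denominator equals
\[
\langle \mathcal{L}_x\mathbbm{1},\nu_{fx,n-1}\rangle \;=\; \frac{\langle \mathcal{L}_{fx}^{n-1}\mathcal{L}_x\mathbbm{1},\sigma\rangle}{\langle \mathcal{L}_{fx}^{n-1}\mathbbm{1},\sigma\rangle} \;=\; \frac{\langle \mathcal{L}_x^n\mathbbm{1},\sigma\rangle}{\langle \mathcal{L}_{fx}^{n-1}\mathbbm{1},\sigma\rangle} \;=\; \exp \Phi_{n-1}^{\sigma}(x),
\]
so we obtain the prelimit identity $\mathcal{L}_x^*\nu_{fx,n-1} = e^{\Phi_{n-1}^{\sigma}(x)}\nu_{x,n}$.

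Second, test both sides against an arbitrary Hölder $\psi$ to get
\[
\langle \mathcal{L}_x\psi,\nu_{fx,n-1}\rangle \;=\; e^{\Phi_{n-1}^{\sigma}(x)}\,\langle\psi,\nu_{x,n}\rangle.
\]
Since $\mathcal{L}_x$ preserves Hölder regularity, Proposition~\ref{thm:fibEigMeas} applies to both sides: the left side tends to $\langle\mathcal{L}_x\psi,\nu_{fx}\rangle = \langle\psi,\mathcal{L}_x^*\nu_{fx}\rangle$ and $\langle\psi,\nu_{x,n}\rangle\to\langle\psi,\nu_x\rangle$, while Theorem~\ref{thm:potential} gives $e^{\Phi_{n-1}^{\sigma}(x)}\to e^{\Phi(x)}$. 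Passing to the limit yields $\langle\psi,\mathcal{L}_x^*\nu_{fx}\rangle = e^{\Phi(x)}\langle\psi,\nu_x\rangle$ for every Hölder $\psi$, and density of $C^\alpha$ in $C(Y)$ upgrades this to the measure identity $\mathcal{L}_x^*\nu_{fx} = e^{\Phi(x)}\nu_x$.

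The only real obstacle is bookkeeping: one has to verify that the shifted prelimit $\Phi_{n-1}^{\sigma}(x)$, built with one fewer iterate but the same $\sigma$ used in the definition of $\nu_{x,n}$, still converges to the same limit $\Phi(x)$. This is immediate from Theorem~\ref{thm:potential}, whose Cauchy bound is uniform in the choice of probability measure on the terminal fiber; thus no new estimates are needed and the argument closes cleanly.
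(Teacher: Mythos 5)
Your proof is correct and follows essentially the same route as the paper: both arguments express $\langle\LL_x\psi,\nu_{fx,\cdot}\rangle$ via the adjoint factorization of $\LL_x^{n}$, identify the normalizing ratio with $e^{\Phi_{n-1}^{\sigma}(x)}$, and pass to the limit using Proposition \ref{thm:fibEigMeas} and Theorem \ref{thm:potential}. Your version is slightly more careful than the paper's in recording the prelimit identity $\LL_x^*\nu_{fx,n-1}=e^{\Phi_{n-1}^{\sigma}(x)}\nu_{x,n}$ and in noting the density step needed to upgrade from H\"older test functions to the measure identity.
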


\begin{proof}
    For all $\psi\in C(Y_x)$, we have \begin{align*}
        \int\psi d(\LL_x^*\nu_{fx})&=\lim_{n\to\infty}\dfrac{\langle\LL_x\psi,(\LL^n_{fx})^*\sigma_{n+1}\rangle}{\langle\mathbbm{1},(\LL^n_{fx})^*\sigma_{n+1}\rangle}\\        &=\lim_{n\to\infty}\dfrac{\langle\LL_x^{n+1}\mathbbm{1},\sigma_{n+1}\rangle}{\langle\LL^n_{fx}\mathbbm{1},\sigma_{n+1}\rangle}\cdot\dfrac{\langle\LL_x^{n+1}\psi,\sigma_{n+1}\rangle}{\langle\LL^{n+1}_{x}\mathbbm{1},\sigma_{n+1}\rangle}=e^{\Phi(x)}.\qedhere
    \end{align*}
\end{proof}

Observe that \[\nu_{fx}(\LL_x\mathbbm{1})=\LL_x^*\nu_{fx}(\mathbbm{1})=e^{\Phi(x)}\nu_x(\mathbbm{1})=e^{\Phi(x)}.\] This completes the proof of Theorem \ref{thm:fibEigMeas}.

\subsection{Regularity of $\Phi$}
Now we will show that $\Phi$ is H\"older continuous. A direct consequence of Lemma \ref{lem:ConeContraction} is the following lemma which we will need to prove the H\"older continuity of $\Phi$. For convenience, we write \[\lambda_x^n=\lambda_x\lambda_{fx}\cdots\lambda_{f^{n-1}x}=e^{S_n\Phi(x)}.\]

\begin{lemma}\label{lem:transbound}
    Let $M$ be as in Lemma \ref{lem:ConeContraction}. Then $e^{-M}\lambda_x^n\leq\mathcal{L}^n_x\mathbbm{1}(y)\leq e^M\lambda_x^n$ for all $n\in\NN$ and $(x,y)\in X\times Y$. 
\end{lemma}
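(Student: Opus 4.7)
The strategy is to combine two ingredients: an eigenmeasure identity forcing the $\nu_{f^nx}$-average of $\LL_x^n\mathbbm{1}$ to equal $\lambda_x^n$, and the observation from Lemma~\ref{lem:ConeContraction} that $\LL_x^n\mathbbm{1}$ belongs to a cone of nearly constant functions. From these, the pointwise bound will fall out from the elementary fact that a function's average against a probability measure lies between its infimum and supremum.

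First, I would iterate the eigenmeasure equation $\LL_x^*\nu_{fx}=\lambda_x\nu_x$ from the preceding proposition to obtain $(\LL_x^n)^*\nu_{f^nx}=\lambda_x^n\nu_x$; pairing against $\mathbbm{1}$ yields $\int\LL_x^n\mathbbm{1}\,d\nu_{f^nx}=\lambda_x^n$, so in particular $\inf\LL_x^n\mathbbm{1}\leq\lambda_x^n\leq\sup\LL_x^n\mathbbm{1}$.

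Next, I would trace $\mathbbm{1}$ through the cone contraction. Since $\mathbbm{1}\in\Lambda_K$ trivially (as $|\mathbbm{1}|_\alpha=0$) and Lemma~\ref{lem:ConeContraction} gives $\LL_y(\Lambda_K^y)\subset\Lambda_{\zeta K}^{fy}\subset\Lambda_K^{fy}$, a direct induction places $\LL_x^n\mathbbm{1}$ inside the \emph{smaller} cone $\Lambda_{\zeta K}^{f^nx}$ for every $n\geq 1$. By the H\"older condition defining $\Lambda_{\zeta K}$, any such $\psi$ satisfies $\sup\psi-\inf\psi\leq|\psi|_\alpha\diam(Y)^\alpha\leq\zeta K\diam(Y)^\alpha\inf\psi$, hence $\sup\psi/\inf\psi\leq 1+\zeta K\diam(Y)^\alpha$. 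Combined with the sandwich from the previous step, this gives
\[
\frac{\lambda_x^n}{1+\zeta K\diam(Y)^\alpha}\leq\LL_x^n\mathbbm{1}(y)\leq(1+\zeta K\diam(Y)^\alpha)\lambda_x^n.
\]

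To match the constant $e^M$ in the statement I would invoke the explicit Hilbert-diameter calculation at the end of the proof of Lemma~\ref{lem:ConeContraction}, namely $M\geq 2\log\bigl((1+\zeta)/(1-\zeta)\bigr)+2\log(1+\zeta K\diam(Y)^\alpha)$, which implies $1+\zeta K\diam(Y)^\alpha\leq e^{M/2}\leq e^M$. The case $n=0$ is immediate since $\LL_x^0\mathbbm{1}=\mathbbm{1}$ and $\lambda_x^0=1$. The only step that requires real care is noticing that iterates of $\mathbbm{1}$ land in the strictly smaller cone $\Lambda_{\zeta K}$ rather than merely in $\Lambda_K$; that is precisely what bounds the oscillation uniformly in $n$ by the same constant $M$ that governs the Hilbert-metric diameter.
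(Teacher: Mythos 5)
Your proof is correct and follows essentially the same route as the paper: both arguments use the iterated eigenmeasure identity $(\LL_x^n)^*\nu_{f^nx}=\lambda_x^n\nu_x$ to sandwich $\lambda_x^n$ between $\inf\LL_x^n\mathbbm{1}$ and $\sup\LL_x^n\mathbbm{1}$, and then bound the ratio $\sup/\inf$ uniformly in $n$ using the cone invariance from Lemma~\ref{lem:ConeContraction}. The only cosmetic difference is that the paper phrases the oscillation bound via the Hilbert metric on the cone of positive functions, whereas you extract it directly from the H\"older condition defining $\Lambda_{\zeta K}$ and the explicit formula for $M$; these are equivalent estimates.
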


\begin{proof}
    Let $\overline{\varphi}_x=\varphi_x-\log \lambda_x$ and write $\overline{\mathcal{L}}^n_x\mathbbm{1}=\sum_{\overline{y}\in g_x^n(y)} e^{S_n\overline{\varphi}_x(y)}$. Theorem \ref{thm:fibEigMeas} gives $(\overline{\LL}_{x})^*\nu_{fx}=\nu_x$ for all $x\in X$. Inductively, we get that $(\overline{\LL}^{n}_x)^*\nu_{f^{n}x}=\nu_{x}$. Then for any $k,\ell$ \[\int \overline{\mathcal{L}}^{k}_x\mathbbm{1} d\nu_{f^kx}=\int \mathbbm{1} d(\overline{\mathcal{L}}^{k}_x)^*\nu_{f^kx}=\nu_x(Y_x)=1=\int\mathbbm{1}d\nu_{f^kx}.\] Let $\Lambda^+$ be the cone of strictly positive continuous functions on $\skprod$. Since $\Lambda_K\subset \Lambda^+$, the projective metrics of the two cones satisfy $\Theta^+(\phi,\psi)\leq\Theta(\phi,\psi)$. Write $\psi_k=\overline{\LL}_x^k\mathbbm{1}$. Then $\inf\psi_k\leq\mathbbm{1}\leq\sup\psi_k$ so $1\leq\frac{\sup\psi_k}{\inf\psi_k}\leq e^M$. We know that $\Theta^+(\psi_k,\mathbbm{1})\leq M$. This implies that $e^{-M}\leq\psi_k\leq e^M$ for all $k\in\NN$. Thus, \[e^{-M}\lambda_x^n\leq\mathcal{L}^n_x\mathbbm{1}(y)\leq e^M\lambda_x^n.\qedhere\]
\end{proof}
Let $n\in\mathbbm{N}$ and $x,x'\in X$, and $y\in Y$. Let $\mathcal{W}_n=\{1,\ldots,d\}^n$. By Lemma \ref{lem:FibPreimage}, we can write \[g^{-1}_{f^{n-1}x}(y)=\{y_1,\ldots,y_d\}\quad \text{and}\quad g^{-1}_{f^{n-1}x'}(y)=\{y'_1,\ldots,y'_d\}\] such that \[d((f^{n-1}x,y_k),(f^{n-1}x',y'_k))\leq L_kd_X(f^nx,f^nx')\] where $L_k=L$ if $1\leq k\leq q$ and $L_k=\gamma^{-1}$ if $q< k\leq d$. Continuing in this way, we get that \[g^{-n}_{x}(y)=\{y_w\in Y_x\colon w\in\mathcal{W}_n\}\quad \text{and}\quad g^{-n}_{x'}(y)=\{y'_w\in Y_{x'_w}\colon w\in \mathcal{W}_n\}\] such that for all $0\leq k\leq n$ \[d(F^{k}(x,y_w),F^{k}(x',y'_w))\leq L_{w_{k+1}}\cdots L_{w_{n}}d_X(f^nx,f^nx').\]

Let $m\leq\NN$ and $0<\iota<1$. A pair of inverse branches of length $n$ starting from $(f^nx,y)$ and $(f^nx',y)$ and labeled by $w\in \mathcal{W}_n$ is \emph{good} if for all $j\in\NN$ such that $jm\leq n$, we have \[\#\{n-jm< i\leq n\colon w_i\leq q\}\leq\iota jm.\] This means that the last $jm$ iterates of an orbit segment of length $n$ will be in the contraction region at most $\iota jm$ times. We will denote the collection of words corresponding to good trajectories by $$\mathcal{W}^\mathcal{G}_n=\mathcal{W}^\mathcal{G}_n(m)=\{w\in\mathcal{W}_n\colon \forall\  j\leq n/m, \#\{n-jm< i\leq n\colon w_i\leq q\}\leq\iota jm\}$$ and the collection of words for bad trajectories by $$\mathcal{W}^\mathcal{B}_n=\mathcal{W}^\mathcal{B}_n(m)=\{w\in\mathcal{W}_n\colon \exists\  j\leq n/m\ \text{such\ that}\ \#\{n-jm< i\leq n\colon w_i\leq q\}\geq\iota jm\}.$$

\begin{lemma}\label{lem:contract}
There is a $Q>0$ such that for all $m\in\mathbbm{N}$, if $(x,\overline{y})$ and $(x',\overline{y}')$ are preimages coded by a word in $\mathcal{W}_n^\mathcal{G}$, then \[d(F^k(x,\overline{y}),F^k(x',\overline{y}'))\leq Q^me^{-2c(n-k)}d(f^nx,f^nx')\] for all $0\leq k<n$.
\end{lemma}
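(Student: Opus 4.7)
The plan is to reduce the lemma to a combinatorial estimate on the product $L_{w_{k+1}} \cdots L_{w_n}$. From the displayed bound immediately preceding the lemma, we have
\[d(F^k(x,\overline{y}), F^k(x',\overline{y}')) \leq L_{w_{k+1}} \cdots L_{w_n}\, d(f^n x, f^n x'),\]
where $L_{w_i} = L$ if $w_i \leq q$ and $L_{w_i} = \gamma^{-1}$ otherwise. So it suffices to show that
\[L_{w_{k+1}} \cdots L_{w_n} \leq Q^m e^{-2c(n-k)}\]
for a constant $Q$ independent of $m, n, k$ and $w \in \mathcal{W}_n^\mathcal{G}(m)$.

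The main step is to split the window $\{k+1,\ldots,n\}$ into a short prefix and a long suffix on which the good-word hypothesis applies. Write $n - k = jm + r$ with $j = \lfloor (n-k)/m \rfloor$ and $0 \leq r < m$. Then the suffix $\{k+r+1,\ldots,n\}$ has length exactly $jm$ and coincides with the last $jm$ indices of $\{1,\ldots,n\}$, so by the definition of $\mathcal{W}_n^\mathcal{G}(m)$ at most $\iota jm$ of those indices satisfy $w_i \leq q$. Hence
\[\prod_{i=k+r+1}^n L_{w_i} \leq L^{\iota jm}\, \gamma^{-(1-\iota)jm} = \bigl(L^\iota \gamma^{-(1-\iota)}\bigr)^{jm} \leq e^{-2c jm}\]
by equation \eqref{eqn:avg}. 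The prefix $\{k+1,\ldots,k+r\}$ is bounded crudely by $L^r \leq L^m$ using $L \geq 1$. Since $jm = (n-k) - r \geq (n-k) - m$, multiplying these two estimates yields
\[L_{w_{k+1}} \cdots L_{w_n} \leq L^m e^{-2c(n-k-m)} = (Le^{2c})^m e^{-2c(n-k)},\]
so setting $Q := L e^{2c}$ handles the main regime $n - k \geq m$.

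No step looks like a genuine obstacle; the one piece of bookkeeping is the degenerate regime $n - k < m$, in which $j = 0$ and the good-word condition is vacuous. There the trivial bound $L_{w_{k+1}} \cdots L_{w_n} \leq L^{n-k}$ combined with $(Le^{2c})^{n-k} \leq (Le^{2c})^m$ (valid because $Le^{2c} \geq 1$ and $n-k \leq m$) directly yields the conclusion. The essence of the lemma is simply that the definition of $\mathcal{W}_n^\mathcal{G}(m)$ was engineered to guarantee the asymptotic contraction rate $e^{-2c}$ per step afforded by \eqref{eqn:avg}, while a prefix of length less than $m$ is absorbed into the prefactor $Q^m$.
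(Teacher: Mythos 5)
Your proof is correct and follows essentially the same route as the paper's: split the window $\{k+1,\ldots,n\}$ into a prefix of length $r<m$ bounded crudely by $L^m$ and a suffix of length $jm$ controlled by the good-word condition together with \eqref{eqn:avg}, then absorb the discrepancy $e^{2c(r)}\leq e^{2cm}$ into the prefactor to get $Q=Le^{2c}$. Your explicit treatment of the degenerate regime $n-k<m$ is a minor bookkeeping point the paper leaves implicit, but both arguments are the same.
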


\begin{proof}
Fix $m\in\NN$. Write $n-k=jm+i$ for $0\leq i<m$.  Since our preimage branches are assumed to be good, we get \begin{align*}
d(F^k(x,\overline{y}),F^k(x',\overline{y}'))&\leq L_{w_{k+1}}\cdots L_{w_{n}}d(F^{k+i}(x,\overline{y}),F^{k+i}(x',\overline{y}'))\\
&\leq L_{w_{k+1}}\cdots L_{w_{k+i}}(L^{\iota jm}\gamma^{-(1-\iota ){jm}})d(f^nx,f^nx')\end{align*}
Recall from \eqref{eqn:avg} that we can choose $c>0$ so that $0<\gamma^{-(1-\iota)}L^\iota<e^{-2c}<1$. Thus, \begin{align*}
d(F^k(x,\overline{y}),F^k(x',\overline{y}'))&\leq L^m e^{-2cjm}d(f^nx,f^nx')\\
&\leq (Le^{2c})^{m}e^{-2c(n-k)}d(f^nx,f^nx').\qedhere
\end{align*}
\end{proof}

Since it is assumed that $\varepsilon_\varphi<\log d-\log q$, then 
    $\frac{qe^{\varepsilon_\varphi}}{d}<1$. Fix $\varepsilon>0$ such that \begin{equation}\label{eqn:theta}\theta:=\frac{qe^\varepsilon e^{\varepsilon_\varphi}}{d}<1.\end{equation} The following lemma due to Varandas and Viana gives us a way to count the number of words that code bad trajectories of a given length. Let $I(\iota,n)=\{w\in\mathcal{W}_n\colon \#\{1\leq k\leq n\colon w_i\leq q\}\geq \iota n\}$.

\begin{lemma}\label{lem:bwords}
    Given $\varepsilon>0$, there exists a $\iota_0\in(0,1)$ such that \begin{equation*}
        \limsup_{n\to\infty}\frac{1}{n}\log\#I(\iota,n) <\log q+\varepsilon
    \end{equation*} for all $\iota\in(\iota_0,1)$. Therefore, there exists a $C>0$ such that $\#I(\iota,n) \leq Cq^ne^{\varepsilon n}$ for all $n$.
\end{lemma}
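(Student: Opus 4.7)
The plan is to treat this as a standard large-deviation / binomial tail estimate for the Bernoulli$(q/d)$ distribution. Count the words in $I(\iota,n)$ by how many of their letters lie in $\{1,\ldots,q\}$: substituting $j=n-k$ to count the letters in $\{q+1,\ldots,d\}$ instead,
\[
\#I(\iota,n)\;=\;\sum_{k=\lceil\iota n\rceil}^{n}\binom{n}{k}q^{k}(d-q)^{n-k}\;=\;q^{n}\sum_{j=0}^{\lfloor(1-\iota)n\rfloor}\binom{n}{j}\left(\frac{d-q}{q}\right)^{j}.
\]
The factor $q^{n}$ is exactly what we are trying to beat; everything else is a subexponential correction provided $\iota$ is close enough to $1$.

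Next I would bound the sum by its number of terms times its maximum term. For $\iota>1/2$ (certainly true once $\iota_{0}$ is chosen close to $1$), the summand is increasing in $j$ on $\{0,\ldots,\lfloor(1-\iota)n\rfloor\}$, so the whole sum is at most
\[
\bigl((1-\iota)n+1\bigr)\binom{n}{\lfloor(1-\iota)n\rfloor}\left(\frac{d-q}{q}\right)^{(1-\iota)n}.
\]
Applying the standard entropy bound $\binom{n}{\delta n}\le \exp(nH(\delta))$ with $H(\delta)=-\delta\log\delta-(1-\delta)\log(1-\delta)$ and taking $\tfrac{1}{n}\log$, one gets
\[
\frac{1}{n}\log\#I(\iota,n)\;\le\;\log q+H(1-\iota)+(1-\iota)\log\!\frac{d-q}{q}+\frac{\log\bigl((1-\iota)n+1\bigr)}{n}.
\]
Letting $n\to\infty$ kills the last term, and both $H(1-\iota)$ and $(1-\iota)\log\frac{d-q}{q}$ tend to $0$ as $\iota\uparrow 1$. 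So choosing $\iota_{0}\in(0,1)$ close enough to $1$ that the sum of these two terms is strictly less than $\varepsilon$ for all $\iota\in(\iota_{0},1)$ yields the $\limsup$ inequality.

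For the second (non-asymptotic) assertion, I would observe that the $\limsup$ bound together with the fact that each $\#I(\iota,n)$ is finite gives, for any fixed $\iota\in(\iota_{0},1)$, some $n_{0}$ beyond which $\#I(\iota,n)\le q^{n}e^{\varepsilon n}$; choosing $C$ large enough to absorb the finitely many small $n$ (using $\#I(\iota,n)\le d^{n}$) produces the stated constant. The only subtle point is that $C$ depends on $\iota$, but this is fine since in the application $\iota$ will be fixed once and for all in terms of $\varepsilon$, $d$, and $q$.

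The main (minor) obstacle is just ensuring monotonicity of the summand so that bounding the sum by its largest term is legitimate; this reduces to checking that $(d-q)/q<(j+1)/(n-j)$ holds across the summation range, which is automatic once $(1-\iota)<q/d$, i.e.\ once $\iota_{0}$ is chosen sufficiently close to $1$. Everything else is a routine entropy estimate.
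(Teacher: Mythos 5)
Your argument is correct, and it is genuinely self-contained where the paper is not: the paper's ``proof'' of this lemma is simply a citation to Lemma~3.1 of Varandas--Viana \cite{VV08}, whose argument is the same kind of Stirling/entropy estimate you carry out explicitly. Your exact identity $\#I(\iota,n)=\sum_{k\ge\lceil\iota n\rceil}\binom{n}{k}q^k(d-q)^{n-k}$ is right, the factor-out of $q^n$ isolates the main term cleanly, and the entropy bound $\binom{n}{\delta n}\le e^{nH(\delta)}$ together with $H(1-\iota)\to0$ and $(1-\iota)\log\frac{d-q}{q}\to0$ as $\iota\uparrow1$ gives exactly the stated $\limsup$ bound; the passage to the non-asymptotic bound with a constant $C=C(\iota)$ is also handled correctly. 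Two small blemishes, neither of which affects validity: (i) you state the monotonicity criterion with the inequality reversed --- the summand increases in $j$ precisely when $\frac{d-q}{q}\ge\frac{j+1}{n-j}$, which over the range $j\le(1-\iota)n$ amounts (for large $n$) to $\iota\ge q/d$ rather than your stated $1-\iota<q/d$; since you send $\iota_0\to1$ anyway, both thresholds are eventually met. (ii) The monotonicity discussion can be skipped entirely: bounding the sum by $\bigl((1-\iota)n+1\bigr)$ times $\binom{n}{\lfloor(1-\iota)n\rfloor}\max\bigl(1,\tfrac{d-q}{q}\bigr)^{(1-\iota)n}$ requires only that $(1-\iota)n<n/2$, and yields the same conclusion with $\log\frac{d-q}{q}$ replaced by its positive part. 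What your approach buys is a transparent, quantitative proof inside the paper itself, with the dependence of $\iota_0$ on $\varepsilon$, $d$, $q$ made explicit --- which is useful here, since $\iota$ enters condition \eqref{eqn:avg} and the reader otherwise has to chase the constant through \cite{VV08}.
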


\begin{proof}
See Lemma 3.1 in Varandas and Viana \cite{VV08}.
\end{proof}

In what follows it will be convenient to write $a\colon \mathcal{W}_n\to Y$ so that $a(w)=y_w$ and $a'\colon \mathcal{W}_n\to Y$ so that $a'(w)=y'_w$. Lemma \ref{lem:FibPreimage} gives us bijections $b_j\colon \mathcal{W}_{jm}\to g^{-jm}_{f^{n-jm}x}(y)$ such that $b_j(v)=g_{x}^{n-jm}(a(uv))$ and $c_v\colon \mathcal{W}_{n-jm}\to g^{-(n-jm)}_{x}(b_j(v))$ such that $c_v(u)=a(uv)$ as well as their associated maps $b_j'$ and $c_v'$.

\begin{figure}[!h]
    \centering
    \includegraphics[width=.6\textwidth]{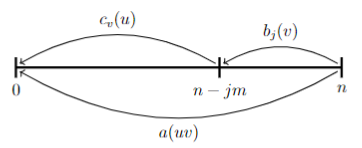}
    \caption{The maps $a$, $b_j$, and $c_v$ on words of corresponding lengths.}
    \label{fig:preimage}
\end{figure}

\begin{lemma}
\label{lem:BadGood}
Let $\theta$ as in \eqref{eqn:theta} above. There exists $C_2>0$ such that $$\sum_{w\in{\mathcal{W}_n^\mathcal{B}(m)}}e^{S_n\varphi_x(a(w))}\leq C_2\theta^m\sum_{w\in{\mathcal{W}_n^\mathcal{G}(m)}}e^{S_n\varphi_x(a(w))}$$ for all $m\in\NN, n\in \NN$, $x,x'\in X$, and $y\in Y$.
\end{lemma}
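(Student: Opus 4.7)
The plan is to decompose $\mathcal{W}_n^\mathcal{B}(m)$ according to the witness $j$. By definition, every $w\in\mathcal{W}_n^\mathcal{B}$ has some $j\leq n/m$ for which the length-$jm$ suffix $v$ lies in $I(\iota,jm)$, so a union bound yields
\[
\sum_{w\in\mathcal{W}_n^\mathcal{B}}e^{S_n\varphi_x(a(w))}\leq \sum_{j=1}^{\lfloor n/m\rfloor}\sum_{v\in I(\iota,jm)}\sum_{u\in\mathcal{W}_{n-jm}}e^{S_n\varphi_x(a(uv))}.
\]
Using the cocycle identity $S_n\varphi_x(a(uv))=S_{n-jm}\varphi_x(c_v(u))+S_{jm}\varphi_{f^{n-jm}x}(b_j(v))$ and the bijections $b_j$, $c_v$ from Figure~\ref{fig:preimage}, the inner sum over $u$ collapses to $e^{S_{jm}\varphi_{f^{n-jm}x}(b_j(v))}\mathcal{L}_x^{n-jm}\mathbbm{1}(b_j(v))$.

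Next, I will apply Lemma \ref{lem:bwords} with the parameter $\iota$ fixed in Section~\ref{sec:ESexun} to bound $\#I(\iota,jm)\leq Cq^{jm}e^{\varepsilon jm}$. Combining this with $\sup\varphi-\inf\varphi<\varepsilon_\varphi$ from \eqref{ass:pot} gives
\[
\sum_{v\in I(\iota,jm)}e^{S_{jm}\varphi_{f^{n-jm}x}(b_j(v))}\leq Cq^{jm}e^{(\varepsilon+\varepsilon_\varphi)jm}e^{jm\inf\varphi},
\]
and comparing with the trivial lower bound $\mathcal{L}^{jm}_{f^{n-jm}x}\mathbbm{1}(y)\geq d^{jm}e^{jm\inf\varphi}$ produces
\[
\sum_{v\in I(\iota,jm)}e^{S_{jm}\varphi_{f^{n-jm}x}(b_j(v))}\leq C\theta^{jm}\mathcal{L}^{jm}_{f^{n-jm}x}\mathbbm{1}(y),
\]
where $\theta=qe^\varepsilon e^{\varepsilon_\varphi}/d<1$ by \eqref{eqn:theta}.

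To strip off the remaining dependence on $b_j(v)$ I will use Lemma \ref{lem:transbound}, which gives the distortion estimate $\sup\phi\leq e^{2M}\inf\phi$ for $\phi:=\mathcal{L}_x^{n-jm}\mathbbm{1}$. Since $\mathcal{L}^{jm}_{f^{n-jm}x}\phi(y)\geq \inf\phi\cdot\mathcal{L}^{jm}_{f^{n-jm}x}\mathbbm{1}(y)$ and the left side equals $\mathcal{L}_x^n\mathbbm{1}(y)$, one has $\phi(b_j(v))\leq e^{2M}\mathcal{L}_x^n\mathbbm{1}(y)/\mathcal{L}^{jm}_{f^{n-jm}x}\mathbbm{1}(y)$ uniformly in $v$. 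Multiplying the two estimates and summing the geometric series in $j$,
\[
\sum_{w\in\mathcal{W}_n^\mathcal{B}}e^{S_n\varphi_x(a(w))}\leq Ce^{2M}\mathcal{L}_x^n\mathbbm{1}(y)\sum_{j=1}^\infty\theta^{jm}\leq \frac{Ce^{2M}}{1-\theta}\theta^m\cdot\mathcal{L}_x^n\mathbbm{1}(y).
\]

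Finally, I convert from the total to the good portion via $\mathcal{L}_x^n\mathbbm{1}(y)=\sum_\mathcal{G}+\sum_\mathcal{B}$: setting $A:=\frac{Ce^{2M}}{1-\theta}\theta^m$, rearranging yields $\sum_\mathcal{B}\leq \frac{A}{1-A}\sum_\mathcal{G}\leq 2A\sum_\mathcal{G}$ whenever $A\leq \tfrac12$, which holds for all $m\geq m_0$ for some fixed $m_0$. For the finitely many remaining small values of $m$ (where $\theta^m$ is bounded below by a positive constant), we simply enlarge $C_2$; indeed, in every application of this lemma below (in Lemma~\ref{lem:BadGood}'s use inside the H\"older-continuity proof of $\Phi$) the parameter $m$ is chosen large. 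The main obstacle is the bookkeeping of the suffix decomposition — checking that $b_j$ and $c_v$ honestly parametrize the preimage tree so that the Birkhoff sum splits multiplicatively — after which the remainder is a straightforward combination of the combinatorial count from Lemma~\ref{lem:bwords} and the distortion control from Lemma~\ref{lem:transbound}.
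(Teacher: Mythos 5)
Your proof is correct and follows essentially the same route as the paper's: decompose the bad words by the witness $j$ and the suffix $v\in I(\iota,jm)$, control the prefix sums via Lemma~\ref{lem:transbound}, count $I(\iota,jm)$ via Lemma~\ref{lem:bwords} together with \eqref{ass:pot}, sum the geometric series in $j$, and convert from the total to $\Sigma_{\mathcal{G}}$ by rearrangement. The one soft spot is your unjustified claim that small $m$ can be absorbed into $C_2$ (boundedness of $\Sigma_{\mathcal{B}}/\Sigma_{\mathcal{G}}$ uniformly in $n$ for fixed small $m$ is not established), but the paper's own proof has exactly the same restriction, silently choosing $m$ large enough that $2e^{2M}C\theta^m<\tfrac12$, and you correctly observe that only large $m$ is used in the application.
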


\begin{proof}
Let $x\in X$ and $y\in Y$.   For any $w\in\mathcal{W}_n^\mathcal{B}$, there is $1\leq j\leq n/m$ such that $w=uv$ for some $u\in\mathcal{W}_{n-jm}$ and $v\in I(\iota,jm)$. Thus,\begin{align*}
    \sum_{w\in{\mathcal{W}_n^\mathcal{B}}}e^{S_n\varphi_x(a(w))}&=\sum_{j=1}^{\lfloor n/m\rfloor}\sum_{v\in I(\iota,jm)}e^{S_{jm}\varphi_{f^{n-jm}x}(b_j(v))}\sum_{u\in\mathcal{W}_{n-jm}}e^{S_{n-jm}\varphi_x(c_v(u))}\\
        &\leq \sum_{j=1}^{\lfloor n/m\rfloor}\sum_{v\in I(\iota,jm)}e^{S_{jm}\varphi_{f^{n-jm}x}(b_j(v))}e^M\lambda_x^{n-jm}
\end{align*} by Lemma \ref{lem:transbound}. Note that for any $j\leq \frac{n}{m}$, \begin{align*}\sum_{w\in\mathcal{W}_n}e^{S_n\varphi_x(a(w))}&=\sum_{v\in \mathcal{W}_{jm}}e^{S_{jm}\varphi_{f^{n-jm}x}(b_j(v))}\sum_{u\in\mathcal{W}_{n-jm}}e^{S_{n-jm}\varphi_x(c_v(u))}\\&\geq e^{-M}\lambda_x^{n-jm}\sum_{v\in \mathcal{W}_{jm}}e^{S_{jm}\varphi_{f^{n-jm}x}(b_j(v))}.\end{align*} Lemma \ref{lem:bwords} implies that $\#I(\iota,n)\leq Cq^{n}e^{\varepsilon n}$ for all $n\geq 0$. Then since $\#\mathcal{W}_n$ is finite, \begin{align*}
\dfrac{\sum_{w\in{\mathcal{W}_n^\mathcal{B}}}e^{S_n\varphi_x(a(w))}}{\sum_{w\in\mathcal{W}_n}e^{S_n\varphi_x(a(w))}}&\leq \sum_{j=1}^{\lfloor n/m\rfloor}\dfrac{e^M\lambda_x^{n-jm}}{\sum_{w\in\mathcal{W}_n}e^{S_n\varphi_x(a(w))}}\sum_{v\in I(\iota,jm)}e^{S_{jm}\varphi_{f^{n-jm}x}(b_j(v))}\\
&\leq e^{2M}\sum_{j=1}^{\lfloor n/m\rfloor}\dfrac{\lambda_x^{n-jm}\sum_{v\in I(\iota,jm)}e^{S_{jm}\varphi_{f^{n-jm}x}(b_j(v))}}{\lambda_x^{n-jm}\sum_{v\in \mathcal{W}_{jm}}e^{S_{jm}\varphi_{f^{n-jm}x}(b_j(v))}}\\
&\leq e^{2M}\sum_{j=1}^{\lfloor n/m\rfloor}\dfrac{\#I(\iota,jm)}{\#\mathcal{W}_{jm}}e^{jm(\sup\varphi-\inf\varphi)}\\
&\leq e^{2M}\sum_{j=1}^{\lfloor n/m\rfloor}\dfrac{Cq^{jm}e^{jm\varepsilon}}{d^{jm}}e^{jm\varepsilon_\varphi}
\end{align*} where the last inequality holds by Lemma \ref{lem:bwords} and \eqref{ass:pot}. Let $\theta=\dfrac{qe^\varepsilon e^{\varepsilon_\varphi}}{d}<1$. Then
\[\dfrac{\sum_{w\in{\mathcal{W}_n^\mathcal{B}}}e^{S_n\varphi_x(a(w))}}{\sum_{w\in\mathcal{W}_n}e^{S_n\varphi_x(a(w))}}\leq e^{2M}\sum_{j=1}^{\infty}\bigg(\dfrac{qe^{\varepsilon}e^{\varepsilon_\varphi}}{d}\bigg)^{jm}= e^{2M}\bigg(\dfrac{\theta^m}{1-\theta^m}\bigg).\]
But $$\sum_{w\in\mathcal{W}_n}e^{S_n\varphi_x(a(w))}=\sum_{w\in{\mathcal{W}_n^\mathcal{B}}}e^{S_n\varphi_x(a(w))}+\sum_{w\in{\mathcal{W}_n^\mathcal{G}}}e^{S_n\varphi_x(a(w))}.$$ Choose $m$ such that $1-\theta^m\geq\frac{1}{2}$. Then \[\sum_{w\in{\mathcal{W}_n^\mathcal{B}}}e^{S_n\varphi_x(a(w))}\leq 2e^{2M}C\theta^m\bigg(\sum_{w\in{\mathcal{W}_n^\mathcal{B}}}e^{S_n\varphi_x(a(w))}+\sum_{w\in{\mathcal{W}_n^\mathcal{G}}}e^{S_n\varphi_x(a(w))}\bigg).\] So if we increase $m$ so that $2e^{2M}C\theta^m<\frac{1}{2}$, then \[\sum_{w\in\mathcal{W}_n^\mathcal{B}}e^{S_n\varphi_x(a(w))}\leq 4e^{2M}C\theta^m\sum_{w\in\mathcal{W}_n^\mathcal{G}}e^{S_n\varphi_x(a(w))}.\] This achieves the desired result.

\end{proof}

Now we apply the above with various values of $m$ to prove H\"older continuity. First, a bound on $\Phi_n$.

\begin{lemma}\label{lem:nPhi}
    There exists $C_3>0$ and $\beta>0$ such that $$\big|\Phi_n(x)-\Phi_n(x')\big|\leq C_3d(f^nx,f^nx')^{\alpha\beta}$$ for all $x,x'\in X$ and $n\in\NN$.
\end{lemma}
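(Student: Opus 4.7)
\emph{Proof plan.} The plan is to compare the fiberwise iterates $\mathcal{L}_x^n\mathbbm{1}(y)$ and $\mathcal{L}_{x'}^n\mathbbm{1}(y)$ pointwise in $y$, by splitting the preimage sums into good and bad words and balancing the two resulting errors by an optimal choice of $m = m(d(f^n x, f^n x'))$. Iterating Lemma \ref{lem:FibPreimage} produces a natural labelling of preimages $a(w) \in g_x^{-n}(y)$ and $a'(w) \in g_{x'}^{-n}(y)$ by the same index $w \in \mathcal{W}_n$. For a good word $w \in \mathcal{W}_n^\mathcal{G}(m)$, Lemma \ref{lem:contract} gives
\[
d(F^k(x, a(w)), F^k(x', a'(w))) \leq Q^m e^{-2c(n-k)} d(f^n x, f^n x') \qquad (0 \leq k \leq n),
\]
so combining with the $\alpha$-H\"older continuity of $\varphi$ and summing over $k$ yields
\[
\bigl|S_n\varphi_x(a(w)) - S_n\varphi_{x'}(a'(w))\bigr| \leq C'\, Q^{m\alpha}\, d(f^n x, f^n x')^\alpha.
\]
When this quantity is at most $1$, the inequality $|e^a-e^b| \leq e^{\max(a,b)}|a-b|$ bounds the good-word contribution to $|\mathcal{L}_x^n\mathbbm{1}(y) - \mathcal{L}_{x'}^n\mathbbm{1}(y)|$ by a constant times $Q^{m\alpha} d(f^n x, f^n x')^\alpha \cdot \mathcal{L}_{x'}^n\mathbbm{1}(y)$. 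For the bad words, Lemma \ref{lem:BadGood} bounds $\sum_{\mathcal{W}_n^\mathcal{B}} e^{S_n\varphi_x(a(w))}$ (and its $x'$-counterpart) by $C_2\theta^m \mathcal{L}_x^n\mathbbm{1}(y)$ respectively $C_2\theta^m \mathcal{L}_{x'}^n\mathbbm{1}(y)$, with no reference to Birkhoff-sum differences. Combining and noting that the good-word estimate already implies $\mathcal{L}_x^n\mathbbm{1}(y) \asymp \mathcal{L}_{x'}^n\mathbbm{1}(y)$ up to a bounded multiplicative constant, one obtains
\[
\left|\frac{\mathcal{L}_x^n\mathbbm{1}(y)}{\mathcal{L}_{x'}^n\mathbbm{1}(y)} - 1\right| \leq C''\bigl(Q^{m\alpha} d(f^n x, f^n x')^\alpha + \theta^m\bigr).
\]

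Choosing $m \asymp \log d(f^n x, f^n x')^{-1}$ to equalize the two right-hand terms yields
\[
|\log\mathcal{L}_x^n\mathbbm{1}(y) - \log\mathcal{L}_{x'}^n\mathbbm{1}(y)| \leq C\, d(f^n x, f^n x')^{\alpha\beta}, \qquad \beta = \frac{\log\theta^{-1}}{\alpha\log Q + \log\theta^{-1}} \in (0,1),
\]
uniformly in $y$ and $n$. Integrating against $\sigma$ and applying the resulting estimate to both $\log\langle \mathcal{L}_x^{n+1}\mathbbm{1},\sigma\rangle$ and $\log\langle \mathcal{L}_{fx}^n\mathbbm{1},\sigma\rangle$ in the definition of $\Phi_n$, together with the Lipschitz bound $d(f^{n+1}x, f^{n+1}x') \leq \mathrm{Lip}(f)\, d(f^n x, f^n x')$, gives the claim.

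The main obstacle is uniformity in $n$: along bad orbits the pointwise difference $|S_n\varphi_x(a(w)) - S_n\varphi_{x'}(a'(w))|$ can grow linearly with $n$, so a direct exponential comparison is hopeless on those branches. Lemma \ref{lem:BadGood} is precisely what circumvents this, by controlling the \emph{aggregate} mass of bad orbits by $\theta^m$ times the full partition sum, regardless of how poorly individual bad branches behave. A secondary technicality is the bounded-scale regime in which $d(f^n x, f^n x')$ is not small enough to arrange $Q^{m\alpha}d^\alpha \leq 1$; this is handled trivially because Lemma \ref{lem:transbound} implies that $\Phi_n$ is uniformly bounded, so the desired H\"older estimate is automatic after enlarging $C_3$.
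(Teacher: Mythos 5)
Your proposal is correct and follows essentially the same route as the paper: the same good/bad word decomposition, Lemma \ref{lem:contract} plus H\"older continuity of $\varphi$ on good branches, Lemma \ref{lem:BadGood} to absorb the bad branches into a $\theta^m$ multiplicative error, and the same balancing choice $m\asymp\log d(f^nx,f^nx')^{-1}$ yielding the identical exponent $\beta=\log\theta^{-1}/(\alpha\log Q+\log\theta^{-1})$. Your explicit handling of the bounded-scale regime via Lemma \ref{lem:transbound} is a small point the paper glosses over, but it does not change the argument.
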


\begin{proof}
First note that along good orbit pairs, we have by Lemma \ref{lem:contract} \begin{align}
    \big|S_n\varphi(x,\overline{y})-S_n\varphi(x',\overline{y}')\big|&\leq \sum_{k=0}^{n-1}|\varphi|_\alpha d(F^k(x,\overline{y}),F^k(x',\overline{y}'))^\alpha\nonumber\\ 
    &\leq \sum_{k=0}^{n-1}|\varphi|_\alpha Q^{\alpha m}e^{-2c\alpha (n-k)}d(f^nx,f^nx')^\alpha\nonumber\\ 
    &\leq Q^{\alpha m}d(f^nx,f^nx')^\alpha\cdot\sum_{k=0}^{\infty}|\varphi|_\alpha e^{-2c\alpha (n-k)}.\nonumber
\end{align}
Let $V=\sum_{k=0}^{\infty}|\varphi|_\alpha e^{-2c\alpha (n-k)}$. Then \begin{equation}
    \big|S_n\varphi(x,\overline{y})-S_n\varphi(x',\overline{y}')\big|\leq VQ^{\alpha m}d(f^nx,f^nx')^\alpha.
    \label{eqn:BirkSum}
\end{equation}
For convenience, we write $\Sigma_\mathcal{G}=\sum_{w\in\mathcal{W}_n^\mathcal{G}}e^{S_n\varphi_x(a(w))}$ and $\Sigma_\mathcal{B}=\sum_{w\in\mathcal{W}_n^\mathcal{B}}e^{S_n\varphi_x(a(w))}$ as well as $\Sigma_\mathcal{G}'$ and $\Sigma_\mathcal{B}'$ for the sums of the preimages associated to $a'(w)$. By Lemma \ref{lem:BadGood}, we get that  
\begin{align}\label{eqn:transRatio}
    \dfrac{\mathcal{L}^n_x\mathbbm{1}(f^nx,y)}{\mathcal{L}^n_x\mathbbm{1}(f^nx',y)}&=\dfrac{\Sigma_\mathcal{G}+\Sigma_\mathcal{B}}{\Sigma_\mathcal{G}'+\Sigma_\mathcal{B}'}
        \leq\dfrac{\Sigma_\mathcal{G}(1+C_2\theta^m)}{\Sigma_\mathcal{G}'(1-C_2\theta^m)}\leq \dfrac{\Sigma_\mathcal{G}}{\Sigma_\mathcal{G}'}\cdot e^{C\theta^m}
\end{align} Note that by \eqref{eqn:BirkSum} $$\dfrac{\Sigma_\mathcal{G}}{\Sigma_\mathcal{G}'}=\dfrac{\sum_{w\in\mathcal{W}_n^\mathcal{G}}e^{S_n\varphi_x(a(w))}}{\sum_{w'\in\mathcal{W}_n^\mathcal{G}}e^{S_n\varphi_{x'}(a'(w))}}\leq \dfrac{\sum_{w'\in\mathcal{W}_n^\mathcal{G}}e^{VQ^{\alpha m}d(f^nx,f^nx')^\alpha}e^{S_n\varphi_{x'}(a'(w))}}{\sum_{w'\in\mathcal{W}_n^\mathcal{G}}e^{S_n\varphi_{x'}(a'(w))}}\leq e^{VQ^{\alpha m}d(f^nx,f^nx')^\alpha}.$$

Let $\Phi_n$ be as in Theorem \ref{thm:potential} for the delta measure on $Y$, $\delta_y\ (y\in Y)$. Then \begin{align*}
    \big|\Phi_n(x)-\Phi_n(x')\big|&=\bigg|\log\bigg(\dfrac{\mathcal{L}^{n+1}_x\mathbbm{1}(f^nx,y)}{\mathcal{L}^{n+1}_{x'}\mathbbm{1}(f^nx',y)}\cdot\dfrac{\mathcal{L}^n_{fx}\mathbbm{1}(f^nx,y)}{\mathcal{L}^n_{fx'}\mathbbm{1}(f^nx',y)}\bigg)\bigg|\\ &\leq 2\log\bigg(\dfrac{\Sigma_\mathcal{G}}{\Sigma_\mathcal{G}'}\cdot e^{C\theta^m}\bigg)
    \\ &\leq 2VQ^{\alpha m}d(f^nx,f^nx')^\alpha+2C\theta^m
\end{align*} where the second inequality holds due to \eqref{eqn:transRatio}.

 Let $\rho_1=\frac{\theta}{Q^\alpha}$ and note that $\rho_1<1$. Then there is a $k\in\NN$ such that \begin{equation}\label{eqn:rho}
    \rho_1^{k+1}\leq d(f^nx,f^nx')^\alpha\leq\rho_1^k.
\end{equation} Now set $m=k$. Let $\beta=\frac{\log\theta}{\log\rho_1}$ and note that \[\theta^m=e^{m\log\theta}=e^{\beta m\log\rho_1}=\rho_1^{\beta m}\leq\rho_1^{-\beta}d(f^nx,f^nx')^{\alpha\beta}.\] Thus, $Q^{\alpha m}d(f^nx, f^nx')^\alpha\leq\theta^m\leq \rho_1^{-\beta}d(f^nx,f^nx')^{\alpha\beta}$. Hence, letting $C_3=2(V+C)\rho_1^{-\beta}$ yields \[\big|\Phi_n(x)-\Phi_n(x')\big|\leq C_3d(f^nx,f^nx')^{\alpha\beta}.\qedhere\]\end{proof}

\begin{theorem}
The potential $\Phi$ constructed in Theorem~\ref{thm:potential} is H\"older continuous.
\label{thm:PhiHolder}
\end{theorem}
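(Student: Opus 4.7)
The plan is to combine Theorem~\ref{thm:potential}, which gives $|\Phi_n(x) - \Phi(x)| \leq C_1 \tau^n$, with Lemma~\ref{lem:nPhi}, which gives $|\Phi_n(x) - \Phi_n(x')| \leq C_3\, d(f^n x, f^n x')^{\alpha\beta}$, via the triangle inequality. For any $x,x'\in X$ and any $n\in\NN$,
\[
|\Phi(x)-\Phi(x')| \;\leq\; |\Phi(x)-\Phi_n(x)| + |\Phi_n(x)-\Phi_n(x')| + |\Phi_n(x')-\Phi(x')| \;\leq\; 2C_1\tau^n + C_3\, d(f^nx, f^nx')^{\alpha\beta}.
\]
The first term decreases in $n$ while the second grows with $n$ (since $f$ expands distances), so the strategy is to choose $n$ as a function of $d(x,x')$ to balance the two.

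Because $F$ is Lipschitz, so is the base map $f\colon X\to X$; let $K$ denote its Lipschitz constant, which necessarily satisfies $K\geq \gamma>1$. Then $d(f^nx, f^nx')\leq K^n d(x,x')$, and the previous inequality becomes
\[
|\Phi(x)-\Phi(x')| \;\leq\; 2C_1 \tau^n + C_3 K^{n\alpha\beta}\, d(x,x')^{\alpha\beta}.
\]
Fix any $c_0\in(0,1/\log K)$, and for $x,x'$ with $d(x,x')$ small enough that $c_0\log(1/d(x,x'))\geq 1$, set $n=\lfloor c_0\log(1/d(x,x'))\rfloor$. A direct substitution yields
\[
\tau^n \;\leq\; \tau^{-1}\,d(x,x')^{c_0|\log\tau|} \qquad\text{and}\qquad K^{n\alpha\beta}\,d(x,x')^{\alpha\beta} \;\leq\; d(x,x')^{\alpha\beta(1-c_0\log K)}.
\]
Both exponents are strictly positive, so $|\Phi(x)-\Phi(x')|\leq C\, d(x,x')^{\alpha''}$ with $\alpha''=\min\bigl(c_0|\log\tau|,\,\alpha\beta(1-c_0\log K)\bigr)>0$. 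For $d(x,x')$ bounded below, a uniform bound on $|\Phi|$ (inherited from the uniform bound on $|\Phi_n|$ given by Lemma~\ref{lem:transbound}) handles the remaining case up to a constant.

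The substance of the argument is already contained in Theorem~\ref{thm:potential} and Lemma~\ref{lem:nPhi}; this final step is the routine balancing and poses no real obstacle. The only subtlety is the choice of $c_0$ respecting $c_0\log K<1$, which guarantees positivity of the second exponent. The Hölder exponent produced is not made explicit, but that suffices for the statement.
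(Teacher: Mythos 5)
Your proposal is correct and follows essentially the same route as the paper: a triangle inequality combining the exponential convergence $|\Phi_n(x)-\Phi(x)|\leq C_1\tau^n$ from Theorem~\ref{thm:potential} with the bound $|\Phi_n(x)-\Phi_n(x')|\leq C_3 d(f^nx,f^nx')^{\alpha\beta}$ from Lemma~\ref{lem:nPhi}, then choosing $n$ as a function of $d(x,x')$ to balance the two terms (the paper does this by picking $k$ with $\rho_2^{k+1}\leq d(x,x')^{\alpha\beta}\leq\rho_2^k$ for $\rho_2=\tau/\Gamma^{\alpha\beta}$, which is the same balancing you perform with $n=\lfloor c_0\log(1/d(x,x'))\rfloor$). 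The resulting Hölder exponent is not the same numerically, but both arguments are valid and yield some positive exponent, which is all the statement requires.
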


\begin{proof}
Let $C=\max\{C_1,C_3\}$. For any $n\geq0$, \begin{align*}
    \big|\Phi(x)-\Phi(x')\big|&\leq \big|\Phi(x)-\Phi_n(x)\big|+\big|\Phi_n(x)-\Phi_n(x')\big|+\big|\Phi_n(x')-\Phi(x')\big|\\
    &\leq 2C\tau^n+Cd(f^nx,f^nx')^{\alpha\beta}\\
    &\leq 2C\tau^n+C\Gamma^{\alpha\beta n}d(x,x')^{\alpha\beta}.\end{align*} where the second inequality follows from Theorem~\ref{thm:potential} and Lemma~\ref{lem:nPhi} and $\Gamma$ is the inherited Lipschitz constant for $f$.\\
    
    Similar to the argument in the proof of Lemma~\ref{lem:nPhi}, we need to adjust the H\"older exponent to establish our bound. Let $\rho_2=\frac{\tau}{\Gamma^{\alpha\beta}}$. Then there is a $k$ such that $\rho_2^{k+1}\leq d(x,x')^{\alpha\beta}\leq \rho_2^k$. Let 
$n=k$ and $\eta=\frac{\log\tau}{\log\rho_2}$. Then \[\tau^n=e^{n\log\tau}=e^{\eta n\log\rho_2}=\rho_2^{\eta n}\leq\rho_2^{-\eta}d(x,x')^{\alpha\beta\eta}.\] So $\Gamma^{\alpha\beta}d(x,x')^{\alpha\beta}\leq \tau^n\leq \rho_2^{-\eta}d(x,x')^{\alpha\beta\eta}$. Therefore, \[\big|\Phi(x)-\Phi(x')\big|\leq 2C\rho_2^{-\eta}d(x,x')^{\alpha\beta\eta}.\qedhere\]
\end{proof}

\section{Conditional Measures of Equilibrium States}
\label{sec:fibers}

Let $\LL_\Phi\colon C(X)\to C(X)$ be defined by \[\LL_\Phi\xi(x)=\sum_{\Bar{x}\in f^{-1}x}e^{\Phi(\overline{x})}\ \xi(\Bar{x})\] for any $\xi\in C(X)$. Since $f$ is uniformly expanding on $X$, Theorem \ref{thm:PhiHolder} implies that there is a unique equilibrium state that can be obtained via $\LL_\Phi$. See \cite{W78} for details.

\begin{theorem}
\label{thm:TransEigen}
For any H\"older $\Phi\colon X\to\RR$, the following hold:
 \begin{enumerate}
     \item There exists a real number $\hat{\lambda}>0$ and $\hat{\nu}\in \MM(X)$ such that $\LL_\Phi^*\hat{\nu}=\hat{\lambda}\hat{\nu}$.
     
     \item There exists a unique $\hat{h}\in C(X)$ such that $\LL_\Phi\hat{h}=\hat{\lambda}\hat{h}$ and $\int_X\hat{h}(x)d\hat{\nu}(x)=1$.
     \item The unique equilibrium state for $\Phi$ is $\Bar{\mu}=\hat{h}\hat{\nu}$.
 \end{enumerate}
\end{theorem}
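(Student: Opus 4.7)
Since $f\colon X\to X$ is uniformly expanding on a compact connected Riemannian manifold and $\Phi$ is H\"older continuous by Theorem~\ref{thm:PhiHolder}, the statement is precisely the classical Ruelle--Perron--Frobenius theorem, and the author already points to \cite{W78}. I would nonetheless sketch a self-contained proof paralleling the work in Section~\ref{sec:trans}, since the same cone-contraction machinery applies in a strictly simpler setting (no contraction region $\mathcal{A}$, so the good/bad word decomposition of Lemmas~\ref{lem:contract}--\ref{lem:BadGood} is unnecessary).

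For (1) I would obtain $\hat{\nu}$ by a Schauder--Tychonoff fixed point argument. The map
\[
T\colon \MM(X)\to\MM(X),\qquad T(\eta)=\dfrac{\LL_\Phi^*\eta}{(\LL_\Phi^*\eta)(\mathbbm{1})},
\]
is well defined since $e^{\Phi}>0$ on the compact set $X$, it is weak$^*$-continuous, and it maps the compact convex space of probability measures to itself. Any fixed point $\hat{\nu}$ satisfies $\LL_\Phi^*\hat{\nu}=\hat\lambda\hat\nu$ with $\hat\lambda=(\LL_\Phi^*\hat\nu)(\mathbbm{1})$, and $\hat\lambda\ge\hat{d}\,e^{\inf\Phi}>0$.

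For (2) I would adapt Lemma~\ref{lem:ConeContraction} to the base map. Let
\[
\Lambda_K^X=\{\psi\in C^\alpha(X)\colon \psi>0\text{ and }|\psi|_\alpha\le K\inf\psi\}\cup\{0\}.
\]
The usual H\"older distortion estimate on $f$, using uniform expansion with rate $\gamma$, shows that for $K$ large enough $\LL_\Phi(\Lambda_K^X)\subset \Lambda_{\zeta K}^X$ for some $\zeta<1$, and that the image has finite Hilbert-metric diameter. Birkhoff's contraction theorem (Theorem~\ref{thm: Birkhoff}) then yields uniform geometric contraction, so $\hat\lambda^{-n}\LL_\Phi^n\mathbbm{1}$ is Cauchy in $C^\alpha(X)$ and converges to a positive H\"older function $\hat h$ with $\LL_\Phi\hat h=\hat\lambda\hat h$. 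Since $\hat h>0$ and $\hat\nu$ is a probability measure, rescaling makes $\int\hat h\,d\hat\nu=1$; uniqueness of $\hat h$ follows from the spectral gap implicit in Birkhoff contraction.

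For (3) a direct computation using the eigen-equations for $\LL_\Phi$ and $\LL_\Phi^*$ shows that $\bar\mu:=\hat h\hat\nu$ is $f$-invariant:
\[
\int \xi\circ f\,d\bar\mu=\hat\lambda^{-1}\!\int\LL_\Phi\bigl(\hat h\cdot(\xi\circ f)\bigr)d\hat\nu=\hat\lambda^{-1}\!\int\xi\,\LL_\Phi\hat h\,d\hat\nu=\int\xi\,d\bar\mu.
\]
From $\LL_\Phi^*\hat\nu=\hat\lambda\hat\nu$ and the uniform expansion of $f$ one deduces in the standard way that $\hat\nu$, and hence $\bar\mu$, satisfies the Gibbs property for $\Phi$ with $P(\Phi)=\log\hat\lambda$. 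In the uniformly expanding H\"older setting, the Gibbs measure is the unique equilibrium state, proving (3). The main ``obstacle'' is merely that this is classical material; no genuinely new idea is needed beyond what Section~\ref{sec:trans} has already developed, and the absence of a contraction region strictly simplifies the cone contraction argument.
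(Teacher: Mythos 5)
Your proposal is correct and matches the paper's treatment: the paper offers no proof of Theorem~\ref{thm:TransEigen}, simply invoking the classical Ruelle--Perron--Frobenius theorem for uniformly expanding maps with H\"older potentials via the citation to \cite{W78}, and your sketch (Schauder--Tychonoff for the conformal measure, cone contraction for the eigenfunction, Gibbs property for uniqueness) is the standard argument behind that citation. You are also right that no analogue of condition \eqref{ass:pot} is needed here, since uniform expansion of $f$ makes the cone invariance $\LL_\Phi(\Lambda_K^X)\subset\Lambda_{\zeta K}^X$ hold for any H\"older potential once $K$ is large.
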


\vspace{5mm} 
We shall show that $\Bar{\mu}=\hat{\mu}=\mu\circ\pi_X^{-1}$ and construct the family of measures $\{\mu_x\}_{x\in X}$. To do this, we first prove the following lemmas.

\begin{lemma}
For any $\psi\in C(\skprod)$, the map $x\mapsto \LL^x_{\varphi}\psi_x$ is continuous with respect to the usual topology.
\label{lem:TransCont}
\end{lemma}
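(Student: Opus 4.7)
The plan is to prove that for any sequence $x_n\to x$ in $X$, the functions $\LL_{x_n}\psi_{x_n}$ converge uniformly on $Y$ to $\LL_x\psi_x$, which gives continuity of the map $x\mapsto\LL_x\psi_x$ into $C(Y)$ with the sup-norm topology (equivalently, joint continuity of $(x,y)\mapsto\LL_x\psi_x(y)$ on $X\times Y$).

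First, I will exploit compactness: since $\skprod$ is compact, both $\varphi$ and $\psi$ are uniformly continuous, and so is the product $(x,y)\mapsto e^{\varphi(x,y)}\psi(x,y)$. This reduces the problem to controlling, for each fixed $\varepsilon>0$, how the preimage structure $g_x^{-1}(y)$ varies with $x$, uniformly in $y$.

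The key tool is Lemma \ref{lem:FibPreimage}, which I will apply with the same target point $y=y'$: it pairs off $g_{x_n}^{-1}(y)=\{y_1^n,\dots,y_d^n\}$ with $g_x^{-1}(y)=\{y_1,\dots,y_d\}$ so that $d((x_n,y_k^n),(x,y_k))\leq L\,d(fx_n,fx)$ for $k\leq q$ and $d((x_n,y_k^n),(x,y_k))\leq\gamma^{-1}d(fx_n,fx)$ for $k>q$. Crucially, the bounds depend only on $d(fx_n,fx)$, not on $y$ or $k$. Since $f$ is continuous (in fact Lipschitz under the hypotheses of Theorem \ref{thm:A}), $d(fx_n,fx)\to 0$, so each paired base--fiber point $(x_n,y_k^n)$ converges to $(x,y_k)$ uniformly in $y$ and $k$.

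Finally, I will expand
\[
\LL_{x_n}\psi_{x_n}(y)-\LL_x\psi_x(y)=\sum_{k=1}^d\bigl(e^{\varphi(x_n,y_k^n)}\psi(x_n,y_k^n)-e^{\varphi(x,y_k)}\psi(x,y_k)\bigr),
\]
and apply the uniform continuity of $e^\varphi\psi$ to each of the $d$ (finitely many) terms, using the pairing from the previous step. Summing over $k=1,\dots,d$ gives a bound that tends to zero uniformly in $y$, yielding the desired uniform convergence. There is no serious obstacle: no cone estimates or contraction arguments are needed here, because the statement is purely topological. The only subtlety is making the pairing of preimages uniform in $y$, which is precisely the content of Lemma \ref{lem:FibPreimage}.
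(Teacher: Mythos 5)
Your proposal is correct and follows essentially the same route as the paper: pair the preimages $g_{x}^{-1}(y)$ and $g_{x'}^{-1}(y)$ via Lemma \ref{lem:FibPreimage}, then use uniform continuity (of $e^\varphi$ and $\psi$ on the compact space $\skprod$) and the finiteness of the sum to get a bound uniform in $y$. The only cosmetic difference is that the paper splits the difference of each summand into $e^{\varphi}(\psi-\psi')$ plus $(e^{\varphi}-e^{\varphi'})\psi'$ rather than invoking uniform continuity of the product $e^\varphi\psi$ directly.
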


\begin{proof}
 Let $\psi\in C(\skprod)$, $x,x'\in X$, and $y\in Y$. Then \[\big|{\LL_{x}}\psi_x(y)-{\LL_{x'}}\psi_{x'}(y)\big|
   \leq \sum_{\overline{y}\in g_x^{-1}y}\Big(e^{\varphi(x,\overline{y})}\big|\psi(x,\overline{y})-\psi(x',\overline{y}')\big|+\|\psi\|_\infty\ \big|e^{\varphi(x,\overline{y})}-e^{\varphi(x',\overline{y}')}\big|\Big).\] Fix $\epsilon>0$. Let $M_1=\sup_{y\in Y}\{\LL_x\psi(y)\}$. By Lemma \ref{lem:FibPreimage}, we can choose $\delta>0$ small enough such that if $d_1(x,x')<\delta$, then for all $\overline{y}\in g_x^{-1}y$, \[\big|\psi(x,\overline{y})-\psi(x',\overline{y}')\big|<\dfrac{\epsilon}{2M_1}\quad \text{and}\quad \big|e^{\varphi(x,\overline{y})}-e^{\varphi(x',\overline{y}')}\big|<\dfrac{\epsilon}{2d\|\psi\|_\infty}.\]  
Therefore, we have that \begin{align*}
    \big|{\LL_{x}}\psi_x(y)-{\LL_{x'}}\psi_{x'}(y)\big|
      &\leq \dfrac{\epsilon}{2M_1}\sum_{\overline{y}\in g_x^{-1}y} e^{\varphi(x,\overline{y})}  + \|\psi\|_\infty\ \sum_{\overline{y}\in g_x^{-1}y} \dfrac{\epsilon}{2d\|\psi\|_\infty}\\&<\frac{\epsilon}{2}+\frac{\epsilon}{2}=\epsilon.\qedhere
    \end{align*}
\end{proof}

\begin{remark}This proof can be extended to hold for all iterates of the transfer operator.\end{remark}

\begin{lemma}\label{lem:measCont}
For every continuous $\psi\colon \skprod\to \RR$, the map $x\mapsto \nu_x(\psi_x)$ is continuous with respect to the usual topology.
\end{lemma}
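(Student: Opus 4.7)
The plan is to write $\nu_x(\psi_x)$ as a uniform-in-$x$ limit of quantities whose continuity is already accessible, and then extend to all continuous $\psi$ by density. To set things up, I would fix any probability measure $\sigma$ on $Y$ and identify it, via $Y\cong Y_{f^nx}$, with a measure on each fiber $Y_{f^nx}$. With this choice, the approximants from Proposition~\ref{thm:fibEigMeas} take the clean form
\[
    \nu_{x,n}(\psi_x) \;=\; \frac{\langle \LL_x^n \psi_x,\sigma\rangle}{\langle \LL_x^n \mathbbm{1},\sigma\rangle}.
\]

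The first step is to check, for each fixed $n$ and each $\psi\in C(\skprod)$, that $x\mapsto\nu_{x,n}(\psi_x)$ is continuous. This is immediate from Lemma~\ref{lem:TransCont} together with the remark extending it to all iterates: $x\mapsto \LL_x^n\psi_x$ is continuous as a map into $(C(Y),\|\cdot\|_\infty)$, so integration against $\sigma$ makes both numerator and denominator continuous in $x$. The denominator is bounded below by $e^{-M}\inf_x \lambda_x^n>0$ by Lemma~\ref{lem:transbound} combined with the continuity of $\Phi$ on the compact space $X$ (Theorem~\ref{thm:PhiHolder}), so the ratio is continuous.

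The second step is to pass to the limit for $\psi\in\Lambda_K$. Proposition~\ref{thm:fibEigMeas} supplies a bound $|\nu_{x,n}(\psi_x)-\nu_x(\psi_x)|\le C_1\|\psi\|_\infty\tau^n$ that is uniform in $x$ because $C_1$ and $\tau$ do not depend on $x$. Hence $x\mapsto\nu_x(\psi_x)$ is a uniform limit of continuous functions, and is therefore itself continuous.

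The final step is a two-stage density argument. For a general $\alpha$-H\"older $\psi$, the decomposition $\psi=(\psi+C)-C$ exhibits $\psi$ as a difference of elements of $\Lambda_K$ provided $C$ is chosen large enough that $|\psi|_\alpha\le K(\inf\psi+C)$; since constants lie in every $\Lambda_K$, linearity of $\nu_x$ then finishes this case. For arbitrary $\psi\in C(\skprod)$, the compact Riemannian manifold $\skprod$ admits H\"older approximants $\psi_k\to\psi$ in sup norm (e.g.\ by mollification or Stone--Weierstrass), and because each $\nu_x$ is a probability measure, $\sup_x|\nu_x(\psi_x)-\nu_x((\psi_k)_x)|\le\|\psi-\psi_k\|_\infty\to 0$, giving continuity by a further uniform-limit argument. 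I do not anticipate a real obstacle: the entire argument rests on the uniformity in $x$ already built into Proposition~\ref{thm:fibEigMeas}, with the only mild care needed to reduce from continuous to H\"older observables via the $\Lambda_K$ decomposition.
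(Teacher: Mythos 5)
Your proof is correct and follows essentially the same route as the paper: approximate $\nu_x(\psi_x)$ by $\nu_{x,n}(\psi_x)=\langle\LL_x^n\psi_x,\sigma\rangle/\langle\LL_x^n\mathbbm{1},\sigma\rangle$, use Lemma~\ref{lem:TransCont} (and its extension to iterates) for continuity at fixed $n$, and use the uniform-in-$x$ rate from Proposition~\ref{thm:fibEigMeas} to pass to the limit. The one place you go beyond the paper is the final density step: the paper applies the bound $|\nu_{x,n}(\psi_x)-\nu_x(\psi_x)|\leq C_1\|\psi\|\tau^n$ directly to an arbitrary continuous $\psi$, even though Proposition~\ref{thm:fibEigMeas} is stated (and proved, via the cone contraction) only for $\psi\in\Lambda_K$. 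Your two-stage reduction --- writing a H\"older $\psi$ as $(\psi+C)-C$ with both pieces in $\Lambda_K$ and using linearity, then approximating a general continuous $\psi$ in sup norm by H\"older functions and using that each $\nu_x$ is a probability measure --- cleanly fills that gap. A minor remark: in your first step the lower bound on the denominator via Lemma~\ref{lem:transbound} is not really needed; $\LL_x^n\mathbbm{1}$ is continuous and strictly positive, which already makes the ratio continuous.
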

 
\begin{proof}
Fix $y\in Y$ and let $\nu_{x,n}=\dfrac{(\LL^n_x)^*\delta_y}{\langle\mathbbm{1},(\LL^n_x)^*\delta_y\rangle}$ be as in Proposition \ref{thm:fibEigMeas}. So as shown there, $\nu_{x,n}\xrightarrow[\phantom{.}]{\text{wk}^*}\nu_x$.  For any $x,x'\in X$, \begin{align*}
   \Bigg|\int\psi d\nu_{x}-\int\psi d\nu_{x'}\Bigg|&\leq\Bigg|\int\psi d\nu_{x}-\int\psi d\nu_{x,n}\Bigg|+\Bigg|\int\psi d\nu_{x,n}-\int\psi d\nu_{x',n}\Bigg|\\ &\qquad\qquad+\Bigg|\int\psi d\nu_{x',n}-\int\psi d\nu_{x'}\Bigg|\\
   &\leq 2C_1\|\psi\|\tau^n + \Bigg|\int\psi d\nu_{x,n}-\int\psi d\nu_{x',n}\Bigg|
\end{align*} where the last inequality holds by Proposition \ref{thm:fibEigMeas}. Note that \[\int\psi d\nu_{x,n}=\dfrac{\langle\psi,(\LL^n_x)^*\delta_y\rangle}{\langle\mathbbm{1},(\LL^n_x)^*\delta_y\rangle}=\dfrac{\LL^n_x\psi(y)}{\LL^n_x\mathbbm{1}(y)}\] is continuous in $x$ by Lemma \ref{lem:TransCont}. Given $\epsilon>0$, choose $n$ sufficiently large so that $2C_1\|\psi\|\tau^n<\epsilon/2$ and $\delta>0$ such that $d(x,x')<\delta$ implies $|\int\psi d\nu_{x,n}-\int\psi d\nu_{x',n}|<\epsilon/2$. Then \[\Bigg|\int\psi d\nu_{x}-\int\psi d\nu_{x'}\Bigg|\leq 2C_1\|\psi\|\tau^n + \Bigg|\int\psi d\nu_{x,n}-\int\psi d\nu_{x',n}\Bigg|<\epsilon.\qedhere\] This proves continuity of $x\mapsto\nu_x(\psi_x)$.
\end{proof}


Define $I\colon C(X\times Y)\to C(X)$ by $(I\psi)(x)=\int_{Y_x}\psi(x,y)d\nu_x(x)$. Observe that for any $\eta\in\MM(X)$, we have \[\langle\psi,I^*\eta\rangle=\int_X(I\psi)(x)d\eta(x)=\int_X\int_Y\psi(x,y)d\nu_x(y)d\eta(x).\] So $\langle I\psi,\eta\rangle=\langle \psi,I^*\eta\rangle$ where $I^*\colon\mathcal{M}(X)\to\mathcal{M}(X\times Y)$ is defined by $I^*\eta=\int_X\nu_xd\eta(x)$.

\begin{theorem}\label{thm:commute}
    The operators $I$ and $I^*$ satisfy $I\circ\mathcal{L}_\varphi=\mathcal{L}_\Phi\circ I$ and $I^*\circ\mathcal{L}_\Phi^*=\mathcal{L}_\varphi\circ I^*$. That is, they make their respective diagrams below commute:\\

    \begin{minipage}{\textwidth}\begin{center}
            \begin{tikzcd}
C(X\times Y) \arrow[r, "\mathcal{L}_\varphi"] \arrow[d,"I"]
& C(X\times Y) \arrow[d, "I"] \\
C(X) \arrow[r, "\mathcal{L}_\Phi"]
& C(X)
\end{tikzcd} \qquad \begin{tikzcd}
\mathcal{M}(X\times Y)  
& \mathcal{M}(X\times Y) \arrow["\mathcal{L}_\varphi^*", l]\\
\mathcal{M}(X) \arrow[u,"I^*"]
& \mathcal{M}(X)\arrow[l, "\mathcal{L}_\Phi^*"]\arrow[u, "I^*"] 
\end{tikzcd}
    \end{center}

\end{minipage}
\end{theorem}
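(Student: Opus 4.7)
The plan is to establish the first identity $I \circ \mathcal{L}_\varphi = \mathcal{L}_\Phi \circ I$ by direct calculation, using the crucial decomposition of the full transfer operator into fiberwise transfer operators together with the eigenequation $\mathcal{L}_x^*\nu_{fx} = e^{\Phi(x)}\nu_x$ from Theorem \ref{thm:fibMeas}. The second identity will then follow by a routine duality argument.

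For the first diagram, I would fix $\psi \in C(X\times Y)$ and $x \in X$, and compute
\[ I(\mathcal{L}_\varphi\psi)(x) = \int_{Y_x} \sum_{\bar{x}\in f^{-1}x}\sum_{\bar{y}\in g_{\bar{x}}^{-1}y} e^{\varphi(\bar{x},\bar{y})}\psi(\bar{x},\bar{y})\,d\nu_x(y), \]
using the skew-product decomposition of $F^{-1}(x,y)$ noted in Section \ref{sec:transop}. Pulling the outer sum outside the integral and recognizing that the inner sum over $\bar{y}\in g_{\bar{x}}^{-1}y$ is precisely $\mathcal{L}_{\bar{x}}\psi_{\bar{x}}$ evaluated at $y \in Y_x = Y_{f\bar{x}}$, this becomes
\[ I(\mathcal{L}_\varphi\psi)(x) = \sum_{\bar{x}\in f^{-1}x}\langle \mathcal{L}_{\bar{x}}\psi_{\bar{x}},\nu_x\rangle = \sum_{\bar{x}\in f^{-1}x}\langle\psi_{\bar{x}},\mathcal{L}_{\bar{x}}^*\nu_x\rangle. \]
Now I invoke Theorem \ref{thm:fibMeas}: since $f\bar{x}=x$, we have $\mathcal{L}_{\bar{x}}^*\nu_x = e^{\Phi(\bar{x})}\nu_{\bar{x}}$, so
\[ I(\mathcal{L}_\varphi\psi)(x) = \sum_{\bar{x}\in f^{-1}x} e^{\Phi(\bar{x})}\langle\psi_{\bar{x}},\nu_{\bar{x}}\rangle = \sum_{\bar{x}\in f^{-1}x} e^{\Phi(\bar{x})}(I\psi)(\bar{x}) = \mathcal{L}_\Phi(I\psi)(x), \]
as desired.

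For the second diagram, duality gives the identity almost immediately. For any $\psi \in C(X\times Y)$ and $\eta \in \mathcal{M}(X)$, the first identity combined with the definitions of the adjoints yields
\[ \langle\psi, I^*\mathcal{L}_\Phi^*\eta\rangle = \langle I\psi, \mathcal{L}_\Phi^*\eta\rangle = \langle \mathcal{L}_\Phi(I\psi),\eta\rangle = \langle I(\mathcal{L}_\varphi\psi),\eta\rangle = \langle \mathcal{L}_\varphi\psi, I^*\eta\rangle = \langle\psi, \mathcal{L}_\varphi^* I^*\eta\rangle, \]
and since this holds for every continuous $\psi$, we conclude that $I^*\mathcal{L}_\Phi^*\eta = \mathcal{L}_\varphi^* I^*\eta$ for every $\eta\in\mathcal{M}(X)$.

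There is no real obstacle here; the content is essentially bookkeeping, with the one substantive input being the fiberwise eigenequation from Theorem \ref{thm:fibMeas}. The only subtlety worth noting is that the definition of $I$ requires that $x\mapsto \nu_x$ produces a genuinely continuous functional on $C(X\times Y)$, but this is ensured by Lemma \ref{lem:measCont}, so $I$ really does map into $C(X)$ and the Riesz-representation interpretation of $I^*$ is well-defined.
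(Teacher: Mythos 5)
Your proposal is correct and follows essentially the same route as the paper's proof: decompose $\mathcal{L}_\varphi$ into the fiberwise operators, apply the eigenequation $\mathcal{L}_{\bar{x}}^*\nu_{f\bar{x}}=e^{\Phi(\bar{x})}\nu_{\bar{x}}$ from Theorem \ref{thm:fibMeas}, and obtain the second identity by duality. Your version is slightly more careful in spelling out the duality step and in noting that Lemma \ref{lem:measCont} is needed for $I$ to land in $C(X)$, which the paper leaves implicit.
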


\begin{proof}
Given $\psi\in C(X\times Y)$, we have \begin{align*}
    I\circ\mathcal{L}_\varphi\psi(x,y)&=\int_{\skprod}\mathcal{L}_\varphi\psi(x,y)\ d\nu_x(y)\\
    &=\int_{\skprod}\sum_{\overline{x}\in f^{-1}x}\sum_{\overline{y}\in g_{\overline{x}}^{-1}y}e^{\varphi(\overline{x},\overline{y})}\psi(\overline{x},\overline{y})d\nu_x(y)\\
    &=\int_{\skprod}\sum_{\overline{x}\in f^{-1}x}(\mathcal{L}_{\overline{x}}\psi)(x,y)d\nu_x(y)\\
    &=\sum_{\overline{x}\in f^{-1}x}\langle\mathcal{L}_{\overline{x}}\psi,\nu_x\rangle\\
    &=\sum_{\overline{x}\in f^{-1}x}\langle\psi,(\mathcal{L}_{\overline{x}})^*\nu_x\rangle\\
    &=\sum_{\overline{x}\in f^{-1}x}e^{\Phi(\overline{x})}\langle\psi,\nu_{\overline{x}}\rangle=\sum_{\overline{x}\in f^{-1}x}e^{\Phi(\overline{x})}(I\psi)(\overline{x})\\
    &=(\mathcal{L}_\Phi\circ I)\psi(x)
\end{align*} Duality gives $I^*\circ\mathcal{L}_\Phi^*=\mathcal{L}_\varphi^*\circ I^*$.
\end{proof}

\begin{corollary}\label{cor:pressure}
$P(\Phi)=P(\varphi)$. Moreover, $\nu,\hat{\nu}$ and $h,\hat{h}$ satisfy $\nu=I^*\hat{\nu}$ and $\hat{h}=Ih$.
\end{corollary}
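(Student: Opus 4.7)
The plan is to leverage Theorem~\ref{thm:commute} in both directions (operator and dual) together with the uniqueness of the leading eigendata guaranteed by Theorem~\ref{CV13} (for $\LL_\varphi$) and Theorem~\ref{thm:TransEigen} (for $\LL_\Phi$).

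First I would identify $I^*\hat{\nu}$ as a conformal measure for $\LL_\varphi$. Applying the dual commutation relation from Theorem~\ref{thm:commute}, one computes
\begin{equation*}
    \LL_\varphi^*(I^*\hat{\nu}) = I^*(\LL_\Phi^*\hat{\nu}) = e^{P(\Phi)}\, I^*\hat{\nu},
\end{equation*}
so $I^*\hat{\nu}$ is a positive eigenmeasure of $\LL_\varphi^*$ with eigenvalue $e^{P(\Phi)}$. A direct check shows $I^*\hat{\nu}$ is a probability measure: pairing with $\mathbbm{1}$ gives $\langle \mathbbm{1}, I^*\hat{\nu}\rangle = \langle I\mathbbm{1},\hat{\nu}\rangle = \int_X \nu_x(Y_x)\,d\hat{\nu}(x) = 1$, using $\nu_x(Y_x)=1$ from Theorem~\ref{thm:fibMeas}. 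The spectral gap property of $\LL_\varphi$ (Theorem~\ref{CV13}) guarantees a unique probability eigenmeasure with its leading eigenvalue $\lambda = e^{P(\varphi)}$; hence $I^*\hat{\nu}=\nu$ and $e^{P(\Phi)} = e^{P(\varphi)}$, i.e., $P(\Phi)=P(\varphi)$.

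Next I would identify $Ih$ as the normalized eigenfunction of $\LL_\Phi$. Using the other half of Theorem~\ref{thm:commute} and $\LL_\varphi h = e^{P(\varphi)} h$,
\begin{equation*}
    \LL_\Phi(Ih) = I(\LL_\varphi h) = e^{P(\varphi)}\, Ih = e^{P(\Phi)}\, Ih.
\end{equation*}
To match the normalization in Theorem~\ref{thm:TransEigen}(2), compute
\begin{equation*}
    \int_X (Ih)\, d\hat{\nu} = \langle Ih,\hat{\nu}\rangle = \langle h, I^*\hat{\nu}\rangle = \langle h,\nu\rangle = \int h\,d\nu = \mu(\skprod) = 1,
\end{equation*}
where the last equality uses $\mu = h\nu$ (Theorem~\ref{CV13}). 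Uniqueness of the normalized positive eigenfunction for $\LL_\Phi$ then forces $\hat{h}=Ih$.

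The argument is largely formal once Theorem~\ref{thm:commute} and the uniqueness statements are in hand; the only subtlety is checking that $I^*\hat{\nu}$ really is a probability measure and that $Ih$ is continuous (hence lies in the correct function space so that Theorem~\ref{thm:TransEigen} applies). Continuity of $Ih$ follows from Lemma~\ref{lem:measCont} applied to $\psi = h$, while the probability normalization falls out of $\nu_x(Y_x)=1$ and the probability normalizations of $\hat{\nu}$ and $\nu$. No further estimates are required.
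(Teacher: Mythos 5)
Your proposal is correct and follows essentially the same route as the paper: apply the two commutation relations of Theorem~\ref{thm:commute} to $\hat{\nu}$ and $h$, then invoke uniqueness of the leading eigendata to conclude $I^*\hat{\nu}=\nu$, $Ih=\hat{h}$, and equality of the eigenvalues $e^{P(\Phi)}=e^{P(\varphi)}$. The only difference is that you spell out the normalization checks ($\langle \mathbbm{1}, I^*\hat{\nu}\rangle=1$, $\int Ih\,d\hat{\nu}=1$) and the continuity of $Ih$ via Lemma~\ref{lem:measCont}, which the paper leaves implicit.
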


\begin{proof}
Note that $\nu,\ \hat{\nu},\ h,\ \hat{h}$ are uniquely determined as eigendata of their corresponding transfer operators. Moreover, by Theorem \ref{thm:commute}, \[\mathcal{L}_\Phi(Ih)=I\mathcal{L}_\varphi h=Ie^{P(\varphi)}h=e^{P(\varphi)}(Ih)\] and \[\mathcal{L}^*_\Phi(I^*\hat{\nu})=I^*\mathcal{L}_\Phi^* \hat{\nu}=I^*e^{P(\Phi)}\hat{\nu}=e^{P(\Phi)}(I\hat{\nu}).\] Thus, $Ih=\hat{h}$ and $I^*\hat{\nu}=\nu$. Since the eigenvalues are equal, we get $P(\varphi)=P(\Phi)$.

\end{proof}

Thus, given any $\psi\in C(\skprod)$, we have

\begin{align*}
    \int\psi\ d\mu&=\int\psi h\ d\nu\\
    &=\int_X\int_{Y}  (\psi\cdot h)({x},y)\ \ d\nu_x(y)d\hat{\nu}(x)\\
    &=\int_X\int_{Y}  \psi({x},y)\cdot \frac{h({x},y)}{\hat{h}(x)}\ \ d\nu_x(y)\hat{h}d\hat{\nu}(x)\\
    &=\int_X\int_{Y_{{x}}}  \psi({x},y)\ \ d\mu_x(y)d\Bar{\mu}(x)
\end{align*} where $\mu_x$ is defined by $\frac{d\mu_x}{d\nu_x}=\frac{h(x,y)}{\hat{h}(x)}$. Note that by Corollary \ref{cor:pressure} \[\mu_x(Y_x)=\int_{Y_x} \frac{h(x,y)}{\hat{h}(x)}d\nu_x(y)=\frac{(Ih)(x)}{\hat{h}(x)}=\frac{\hat{h}(x)}{\hat{h}(x)}=1.\] Therefore, $\Bar{\mu}=\hat{\mu}$ and $\{\mu_x\}_{x\in X}$ is the unique family of conditional measures for $\mu$.

\vspace{5mm}



\bibliography{references}
\bibliographystyle{acm}

\end{document}